\journalname{}
\date{ \phantom{b} \vspace{45mm}\phantom{e}}
\def\bigo{{\mathcal O}}
\def\real{{\mathbf R}}
\def\d{{\mathrm d}}
\def\e{{\mathrm e}}
\def\iu{\mathrm{i}}
\def\eps{\varepsilon}
\DeclareMathOperator{\sinc}{sinc}
\DeclareMathOperator{\tanc}{tanc}
\DeclareMathOperator{\sinch}{sinch}
\DeclareMathOperator{\tanch}{tanch}
\def\Re{{\mathrm{Re}\,}}
\newdimen\GGGlength
\newdimen\GGGheight
\newbox\GGGbox
\def\GGGput[#1,#2](#3,#4)#5{%
  \setbox\GGGbox\vbox{\hbox{#5}\kern0pt}%
  \GGGlength\wd\GGGbox%
  \divide\GGGlength by100 \multiply\GGGlength by#1%
  \GGGheight\ht\GGGbox%
  \divide\GGGheight by100 \multiply\GGGheight by#2%
  \put(#3,#4){\kern-\GGGlength\raise-\GGGheight\box\GGGbox}}
\begin{document}

\title{Large-stepsize integrators for charged-particle dynamics over multiple time scales}

\titlerunning{Charged-particle dynamics over multiple time scales}

\author{Ernst Hairer$^1$, Christian Lubich$^2$, Yanyan~Shi$^3$}
\authorrunning{E.\ Hairer, Ch.\ Lubich, Y.\ Shi}

\institute{$^1$~Dept.\ de Math{\'e}matiques, Univ.\ de Gen{\`e}ve,
CH-1211 Gen{\`e}ve 24, Switzerland.\\
\phantom{$^1$~}\email{Ernst.Hairer@unige.ch}\\
$^2$~Mathematisches Institut, Univ.\ T\"ubingen, D-72076 T\"ubingen, Germany.\\
\phantom{$^2$~}\email{Lubich@na.uni-tuebingen.de}\\
$^3$~LSEC, Academy of Mathematics and Systems Science, Chinese Academy of Sciences,\\ 
\phantom{$^3$~}Beijing 100190, China; University of Chinese Academy of Sciences, Beijing 100049, China.\\
\phantom{$^3$~}\email{shiyanyan1995@lsec.cc.ac.cn}
}

\date{ }

\maketitle

\begin{abstract} The Boris algorithm, a closely related variational integrator and a newly proposed filtered variational integrator are studied when they are used to numerically integrate the equations of motion of a charged particle in a non-uniform strong magnetic field,
taking step sizes that are much larger than the period of the Larmor rotations. For the Boris algorithm and the standard (unfiltered) variational integrator, satisfactory behaviour is only obtained when the component  of the initial velocity orthogonal to the magnetic field is filtered out. The particle motion shows varying behaviour over multiple time scales: fast Larmor rotation, guiding centre motion, slow perpendicular drift, near-conservation of the magnetic moment over very long times and conservation of energy for all times. Using modulated Fourier expansions of the exact and numerical solutions, it is analysed  to which extent this behaviour is reproduced by the three numerical integrators used with large step sizes.

\smallskip\noindent
{\it Keywords.\,}
Charged particle, strong magnetic field, Boris algorithm,  variational integrator, filtered variational integrator, modulated Fourier expansion, long-term behaviour
\bigskip

\it\noindent
Mathematics Subject Classification (2010): \rm\,
65L05, 65P10, 78A35, 78M25
\end{abstract}

\section{Introduction}\label{sec:intro}

The time integration of the equations of motion of  charged particles is a basic algorithmic task for particle methods in plasma physics \cite{birdsall05ppv}.
In this paper we consider the case of a non-uniform strong magnetic field in the asymptotic scaling known as maximal ordering \cite{brizard07fon,possanner18gfv}, with a small parameter $\eps \ll 1$  whose inverse corresponds to the strength of the magnetic field.
The particle motion then shows different behaviour over multiple time scales: 
\begin{itemize}
\item  fast Larmor rotation over the time scale $\eps$,  
\item guiding centre motion over the time scale $\eps^0$, 
\item slow drift perpendicular to the magnetic field over the time scale $\eps^{-1}$, 
\item near-conservation of the magnetic moment over time scales $\eps^{-N}$ with arbitrary $N>1$, 
\item and energy conservation for all times.
\end{itemize}

In this paper we are interested in using numerical integrators with step sizes $h$ that are much larger than the quasi-period $2\pi \eps$ of the Larmor rotation. 
We thus have the two small parameters $h$ and $\eps$, which we will assume to be related by
\begin{equation}\label{eps-h2}
0 < \eps \le h^2 \ll 1.
\end{equation}
We study the behaviour of the numerical integrators over the time scales $\eps^0$, $\eps^{-1}$, and $\eps^{-N}$ for $N>1$.
We are not aware of previous numerical analysis in this large-stepsize regime.  With an emphasis on different aspects, recent papers on numerical methods for charged-particle dynamics in a strong magnetic field include \cite{chartier19uam,chartier20uam,crouseilles17uap,filbet16asp,filbet17app,filbet20cao,hairer20lta,hairer20afb,ricketson20aec,wang20eeo}.

In Section 2 we formulate the equations of motion in the scaling considered here and illustrate the solution behaviour over various time scales.

In Section 3 we describe the three numerical integrators studied in this paper: the {\it Boris algorithm} \cite{boris70rps,qin13wib,ellison15cos,hairer18ebo}, a closely related {\it variational integrator} \cite{webb14sio,hairer20lta}, and a newly proposed {\it filtered variational integrator}, which only requires a minor algorithmic modification of the standard variational integrator and can be interpreted as the standard variational integrator for a Lagrangian with an anisotropically modified kinetic energy term.

In Section 4 we give modulated Fourier expansions of the exact solution and of the numerical solutions of the three numerical methods used with step sizes \eqref{eps-h2}. The differential equations for the dominant modulation functions are the key to understanding the method behaviour over the times scales $\eps^0$ and $\eps^{-1}$ for all three methods.
For the Boris algorithm and the standard (unfiltered) variational integrator, the initial velocity needs to be modified such that its component perpendicular to the magnetic field is $O(\eps)$-small. The complete modulated Fourier expansion will be used for studying the long-time near-conservation of the magnetic moment and energy for the filtered variational integrator.

In Section 5 we obtain $O(h^2)$ error bounds uniformly in $\eps$ for all three (formally second-order) numerical methods over the time scale $\eps^0$. This is not an obvious result for large step sizes \eqref{eps-h2} but here it follows directly from a comparison of the modulated Fourier expansions of the exact and numerical solutions. 

In Section 6 we show that all three methods reproduce the perpendicular drift with an $O(h^2)$ or $O(h)$ error over the time scale $\eps^{-1}$. This is again obtained via the modulated Fourier expansions, which also yield an $O(\eps)$ approximation to the perpendicular drift by the solution of a slow differential equation over times $O(\eps^{-1})$.

In Section 7  we consider the long-term energy behaviour. For the standard variational integrator with the modified starting velocity we prove
near-conservation of the total energy up to time $O(\eps^{-1})$. For the filtered variational integrator we prove near-conservation of magnetic moment and energy over times $\eps^{-N}$ with arbitrary $N>1$ for non-resonant step sizes, using the Lagrangian structure of the modulation system. Moreover, we show results of numerical experiments for the energy behaviour of the three methods over long times.

The conclusion of our investigation is that the new filtered variational integrator with non-resonant large step sizes  \eqref{eps-h2} reproduces the characteristic features well over all time scales, and this is fully explained by our theory. The Boris algorithm and the standard (unfiltered) variational integrator also work remarkably well for large stepsizes \eqref{eps-h2} on the time scales $\eps^0$ and $\eps^{-1}$ in accordance with our theory, provided that the initial velocity is modified such that the component perpendicular to the magnetic field is reduced to size $O(\eps)$. With this filtering of the starting velocity, the long-time energy behaviour of the Boris method and the standard variational integrator  appears to be better  in our numerical experiments than we can explain by theory.

\section{Multiple time scales in the continuous problem}

We study the time integration of the equations of motion of a charged particle in a strong magnetic field,
with position $x(t)\in\real^3$ and velocity $v(t)=\dot x(t)$ at time $t$,
\begin{equation}\label{ode}
\begin{aligned}
&\ddot x(t) =  \dot x(t) \times B(x(t)) + E(x(t))
\\
&\text{with}\quad
B(x) = \frac 1 \eps \, B_0 + B_1(x) \quad\text{for }\  0<\eps\ll 1 ,
\end{aligned}
\end{equation}
where $B_0$ is a fixed vector in $\real^3$ of unit norm, $|B_0|=1$. The non-constant magnetic field $B_1(x)$  is assumed to have a known vector potential $A_1(x)$, i.e. $B_1(x)= \nabla_x \times A_1(x)$. This gives
$B(x)= \nabla_x \times A(x)$ with the vector potential $A(x) = -\frac12 x \times B_0/\eps + A_1(x)$. 
We always assume that $B_1:\real^3\to\real^3$ and $E:\real^3\to\real^3$ are smooth with derivatives bounded independently of $\eps$ on bounded subsets of $\real^3$. The above scaling corresponds to what is known as {\it maximal ordering} in the literature; see \cite{brizard07fon,possanner18gfv}.

For the initial position and velocity we always assume boundedness independently of $\eps$:
\begin{equation}\label{init}
|x(0)| \le C_0, \quad\ |\dot x(0)| \le C_1.
\end{equation}

For studying the perpendicular drift, we need further assumptions on $B_1$ and $E$ that are specified in Section 6.
When it comes to studying the long-time energy behaviour, we further assume that the force field has a scalar potential, $E(x)=-\nabla \phi(x)$. The total energy is then
\begin{equation}\label{H}
H(x,v) = \tfrac12 |v|^2 + \phi(x),
\end{equation}
which is conserved along every trajectory and is bounded independently of~$\eps$ under condition \eqref{init}.
We further consider the magnetic moment (rescaled with $\eps$), 
\begin{equation}\label{I}
I(x,v)= \frac1{2\eps} \frac{|v \times B(x)|^2}{|B(x)|^3}.
\end{equation}
We note that, with $v_\perp$ denoting the velocity component orthogonal to $B(x)$, 
\begin{equation}\label{I-vperp}
I(x,v)= \frac12\,\frac{|v \times B_0+ O(\eps)|^2}{1+O(\eps)}  =  \tfrac12|v_\perp|^2 (1+O(\eps))+ O(\eps^2),
\end{equation}
for $(x,v)$ in any region that is bounded independently of $\eps$. The magnetic moment is an adiabatic invariant: it is conserved up to $O(\eps)$ over  very long times $t\le \eps^{-N}$ with arbitrary $N>1$;
see e.g. \cite{kruskal58tgo,northrop63tam,benettin94aia,hairer20lta}.

\begin{figure}[!ht]
\label{fig:timescales}
\centerline{
\includegraphics[scale=0.48]{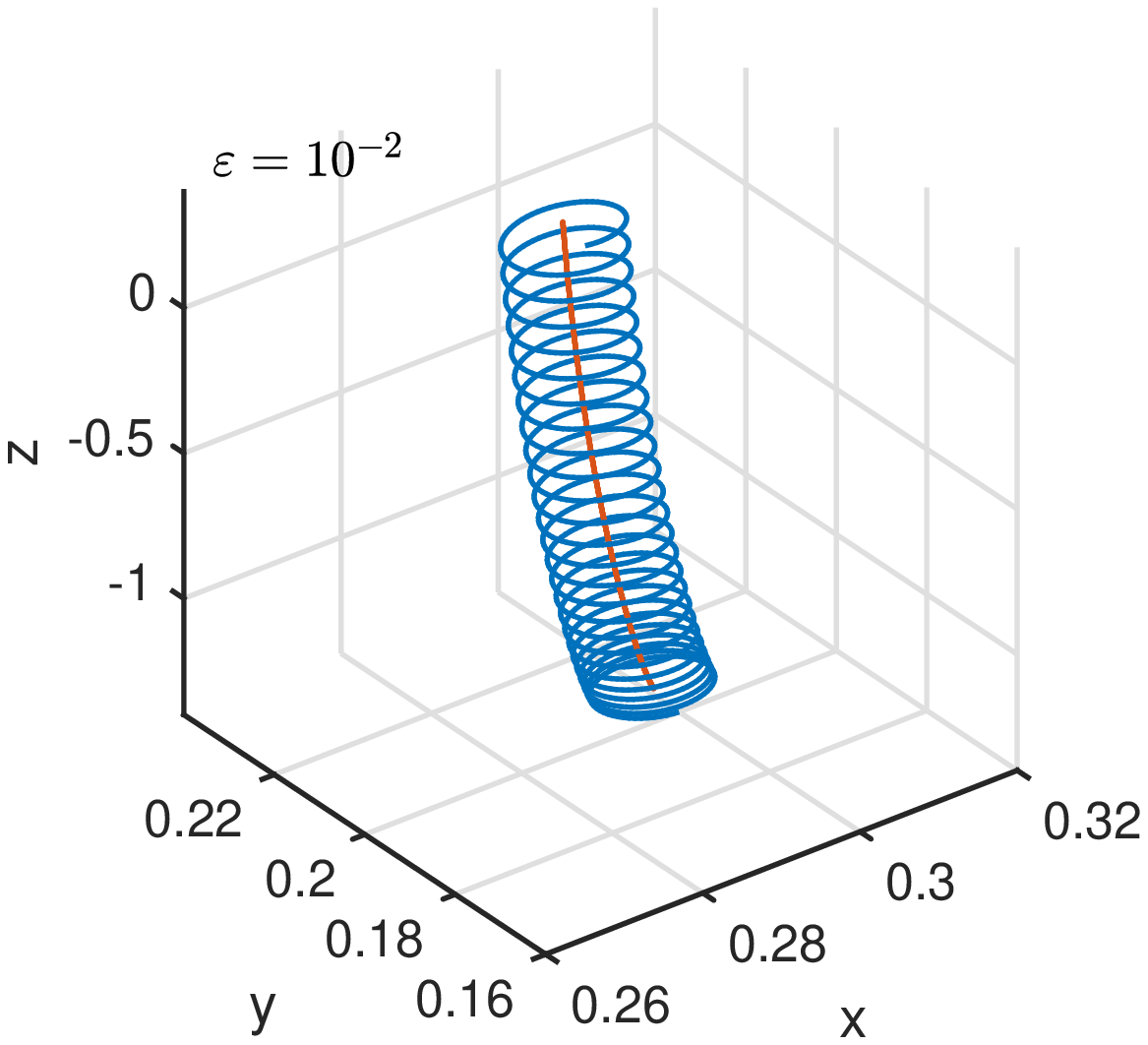}
\includegraphics[scale=0.48]{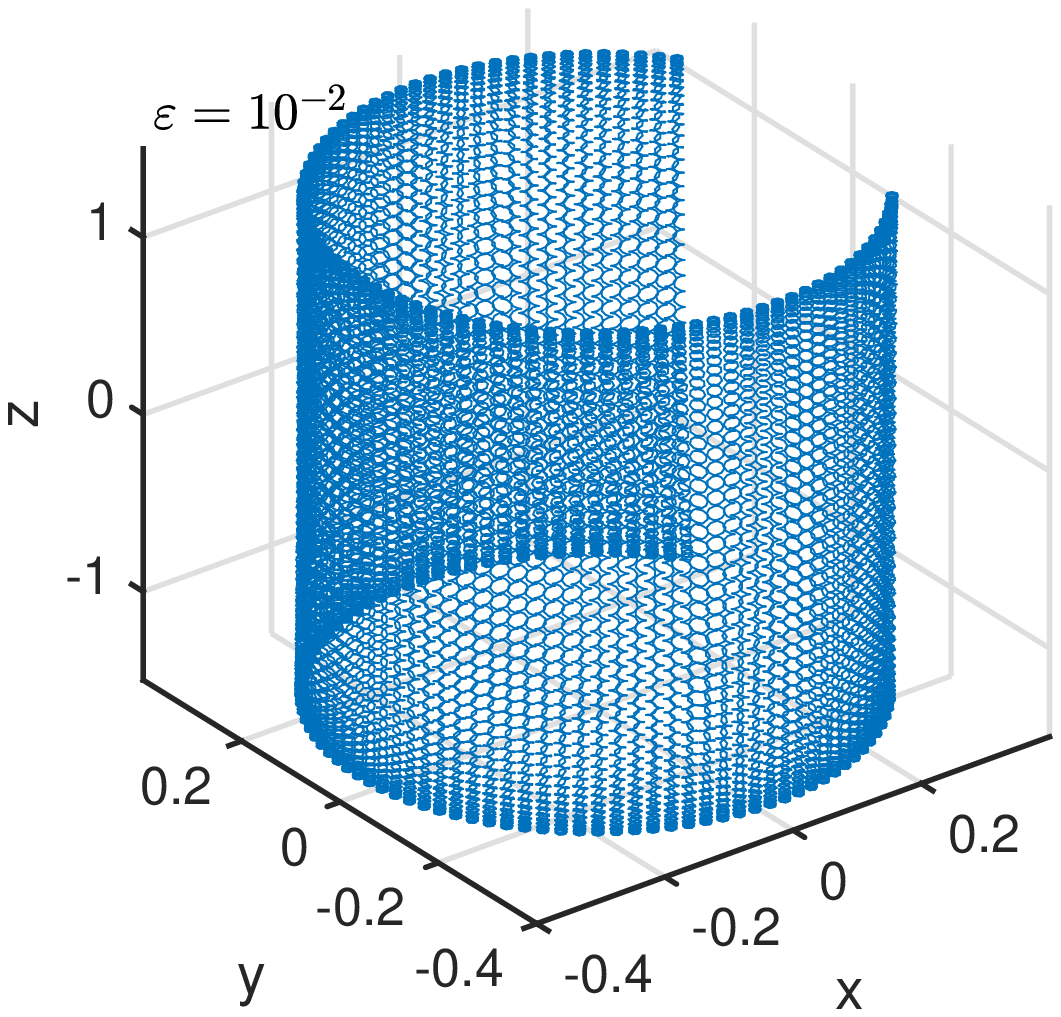}}
\centerline{\includegraphics[scale=0.5]{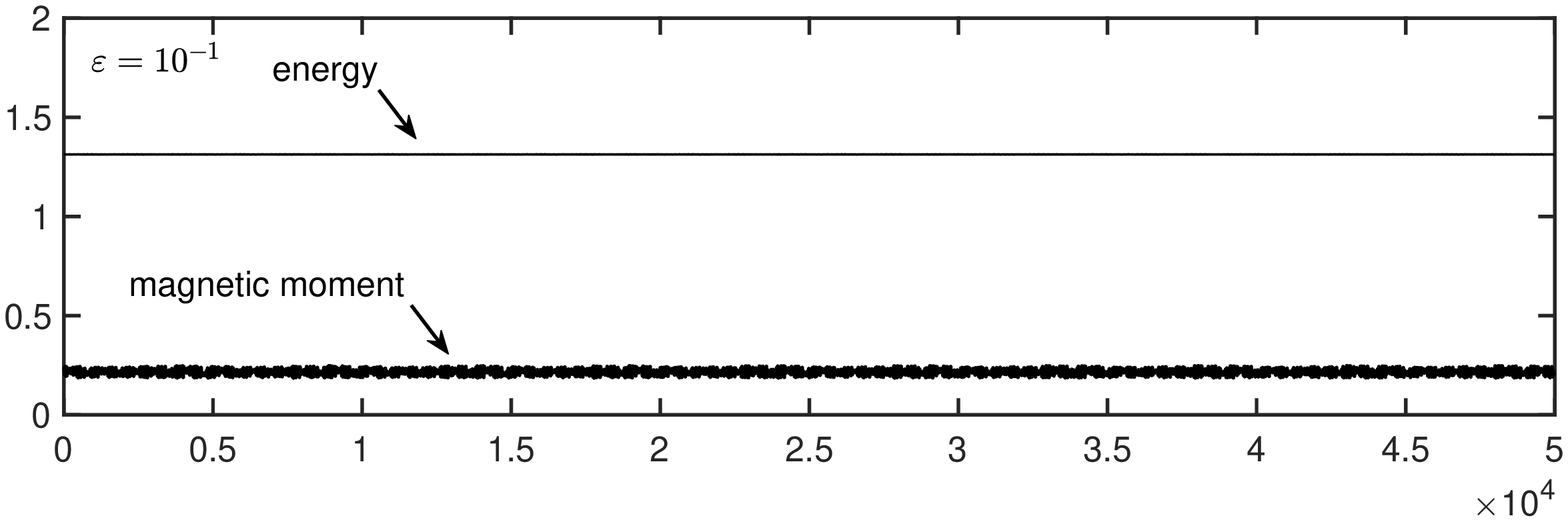}}
\caption{Trajectories of the particle for $t\le \pi/2$ (top left) and $t \le 5/\eps$ (top right).  Energy and magnetic moment for $t\le \eps^{-4}$ (bottom). The analogous picture for $\eps=10^{-2}$ would show the magnetic moment as a horizontal straight line.}
\end{figure}

In Figure~2.1 
we illustrate the solution behaviour on various time scales. We show the fast Larmor rotation of angular frequency $\eps^{-1}$ and amplitude $O(\eps)$ on the time scale $\eps$ and the guiding centre motion on the time scale $\eps^0$ in the first picture, and in addition the slow drift perpendicular to the magnetic field  on the time scale $\eps^{-1}$ in the second picture (here: horizontal drift for the magnetic field in vertical direction). Finally, the third picture shows the long-time near-conservation of the magnetic moment and the conservation of energy. Our objective is to understand how the behaviour on the various time scales can be replicated by numerical methods with large time steps that do not resolve the fast Larmor rotations.

In Figure~2.1 we take the electromagnetic fields and the vector and scalar potentials as
\begin{align*}
&B(x)=
\frac{1}{\varepsilon}
\begin{pmatrix}
0\\0\\1
\end{pmatrix}+
\begin{pmatrix}
x_1(x_3-x_2)\\ x_2(x_1-x_3) \\x_3(x_2-x_1)
\end{pmatrix}
\ 
\text{ with }
A(x) = -\frac1{2} 
\begin{pmatrix}
x_2 \\ -x_1 \\ 0
\end{pmatrix}
+ x_1x_2x_3 \begin{pmatrix}
1\\1\\1
\end{pmatrix},
\\[1mm]
&E(x) = -x\ \text{ with } \phi(x)=\tfrac12 |x|^2,
\end{align*}
and the initial values 
$x(0)=(0.3,0.2,-1.4)^\top$  and $\dot x(0)=(-0.7,0.08,0.2)^\top$.

\section{Three numerical integrators}

We now describe the three numerical integrators for \eqref{ode} that are studied in this paper when applied with large step sizes $h\gg \eps$.

\subsection{Boris algorithm}
The Boris method, introduced in \cite{boris70rps}, is the standard integrator for particle-in-cell codes for
plasma simulation; see e.g. \cite{birdsall05ppv,derouillat18sac}. Given the position and velocity approximation $(x^n,v^{n-1/2})$, the algorithm computes $(x^{n+1},v^{n+1/2})$ as follows, with  $B^n=B(x^n)$ and $E^n=E(x^n)$:
\begin{align} \nonumber
v_{+}^{n-\tfrac{1}{2}}&=v^{n-\tfrac{1}{2}}+\frac{h}{2}E^n
\\ \nonumber
v^{n+\tfrac{1}{2}}_{-}-v^{n-\tfrac{1}{2}}_{+}&=\frac{h}{2}(v^{n+\tfrac{1}{2}}_{-}+v^{n-\tfrac{1}{2}}_{+})\times B^n
\\ \nonumber
v^{n+\tfrac{1}{2}}&=v^{n+\tfrac{1}{2}}_{-}+\frac{h}{2}E^n
\\
x^{n+1}&=x^n+hv^{n+\tfrac{1}{2}},
\label{boris-1}
\end{align}
where the starting value is chosen as $v^{1/2}=v^0 + \frac{h}{2}v^0\times B^0 + \frac{h}{2}E^0$.

The method has the equivalent two-step formulation 
\begin{equation}\label{eq:Boris}
\frac{x^{n+1}-2x^n+x^{n-1}}{h^2}=\frac{x^{n+1}-x^{n-1}}{2h}\times B^n +E^n,
\end{equation}
with the velocity approximation
\begin{equation}\label{vn}
v^n=\frac{x^{n+1}-x^{n-1}}{2h}.
\end{equation}

It is known from \cite{ellison15cos} that the Boris algorithm is not symplectic unless $B$ is a constant magnetic field. 
The energy behaviour over long times, which is not fully satisfactory, has been studied in \cite{hairer18ebo} for step sizes with $h|B|\ll 1$, which in our case \eqref{ode} would read $h\ll\eps$ in contrast to \eqref{eps-h2}.

In the large-stepsize regime \eqref{eps-h2}  the starting velocity needs to be modified. Instead of setting $v^0$ equal to the initial data $\dot x(0)$ we choose $v^0$ such that its component $v^0_\perp$ orthogonal to the magnetic field is $O(\eps)$-small. We propose to take
$v^0=  v^0_\parallel+ v^0_\perp$ with
\begin{equation}\label{v-init-mod}
v^0_\parallel =P_0\dot x(0), \quad v^0_\perp = \eps\bigl(v^0_\parallel \times B_1(x^0)+E(x^0)\bigr)\times B_0,
\end{equation}
where $P_0=B_0B_0^\top$ is the orthogonal projection in the direction of $B_0$.
(This choice of $v^0_\perp$ will be explained in Section 4 right after Theorem~\ref{thm:mfe-boris}.)
Without such a modification of the starting velocity, the Boris algorithm shows highly oscillatory behaviour with a large amplitude proportional to $(h^2/\eps)|v^0_\perp|$; cf.~\cite{ricketson20aec}.

Since the Boris method with large step size \eqref{eps-h2} and the proposed filtering of the initial velocity will give an approximation to the guiding centre rather than to the oscillatory trajectory, it is reasonable to take the guiding centre approximation $x(0)+ \eps \dot x(0)\times B_0$ instead of $x(0)$ as the starting position $x^0$.

We note that while the one-step map
$(x^n,v^{n-1/2}) \mapsto (x^{n+1},v^{n+1/2})$
is volume-preserving \cite{qin13wib}, the starting-value map $(x(0),\dot x(0)) \mapsto (x^0,v^0)$ and also the map $(x^0,v^0)\mapsto (x^1,v^{1/2})$ are far from volume-preserving for step sizes \eqref{eps-h2}.

\subsection{Standard variational integrator} 
The variational integrator to be studied here is constructed in the same way as is done in the interpretation of the St\"ormer--Verlet method  as a variational integrator; see e.g. \cite[Chap.\,VI, Example 6.2]{hairer06gni} and \cite{webb14sio}.  The integral of the Lagrangian $L(x,v)=\tfrac12 |v|^2 + A(x)^\top v - \phi(x)$ over a time step is approximated in two steps: the path $x(t)$ of positions is approximated by the linear interpolant of the endpoint positions, and the integral is approximated by the trapezoidal rule. This approximation to the action integral is then extremized. With
the derivative matrix $A'(x)=(\partial_j A_i(x))_{i,j=1}^3$ and its transpose $A'(x)^\top$, this variational integrator becomes the following:
\begin{align}\label{varint}
 &\frac{x^{n+1}-2 x^n + x^{n-1}}{h^2} =
 \\
 \nonumber
 &\qquad A'(x^n)^\top \,\frac {x^{n+1}-x^{n-1}}{2h}
-  \frac{A(x^{n+1})-A(x^{n-1})}{2h} +E^n,
\end{align}
or equivalently, written as a perturbation to the Boris algorithm and using that $v\times B(x)=A'(x)^\top v - A'(x)v$,
\begin{align} \nonumber
 &\frac{x^{n+1}-2 x^n + x^{n-1}}{h^2} = \frac{x^{n+1}-x^{n-1}}{2h}\times B^n +E^n
 \\
\label{varint-borislike}
 &\qquad + A'(x^n) \,\frac {x^{n+1}-x^{n-1}}{2h}
-  \frac{A(x^{n+1})-A(x^{n-1})}{2h}\, .
\end{align}
We note that the correction to the Boris method as given in the second line vanishes for linear $A(x)$. In the situation of the magnetic field of \eqref{ode}, we can therefore replace $A$ by $A_1$ in \eqref{varint-borislike}. The variational integrator coincides with the Boris algorithm in the case of a constant magnetic field ($B_1\equiv0$).

This method is again complemented with the velocity approximation~\eqref{vn}.  
It can be given a one-step formulation similar to the Boris algorithm, with the correction term of \eqref{varint-borislike} added in the second line of \eqref{boris-1}.
It is, however, an implicit method, because the vector potential $A$ is evaluated at the new position $x^{n+1}$.

For the case of a strong magnetic field and for step sizes with $h|B|\le \mathit{Const.}$, the variational integrator has been shown to have excellent near-preservation of energy and magnetic moment over very long times \cite{hairer20lta}.

For large step sizes \eqref{eps-h2}, the variational integrator requires the same modification of the starting velocity as the Boris method in order to suppress high oscillations of large amplitude in the numerical solution.

\subsection{Filtered variational integrator}
As a new method to be studied here, we propose the following modification of the variational integrator:
with the filter functions 
\begin{align*}
&\psi(\zeta)=\tanch(\zeta/2)=\frac{\tanh(\zeta/2)}{\zeta/2},\qquad 
\varphi(\zeta)=\frac 1{\sinch(\zeta)}=\frac\zeta{\sinh(\zeta)},
\end{align*}
which are even functions and take the value $1$ at $\zeta=0$,
and with the skew-symmetric matrix $\widehat B_0$ defined by $-\widehat B_0 v =  v \times B_0$ for all $v\in\real^3$, we define the filter matrices
\begin{align*}
&\Psi=\psi\Bigl(-\frac h\eps\, \widehat B_0 \Bigr) = I + \biggl(1-\tanc\Bigl( \frac h{2\eps} \Bigr)\biggr) \widehat B_0^2, 
\\[1mm]
&\Phi=\varphi\Bigl(-\frac h\eps\, \widehat B_0 \Bigr) = I + \biggl(1-\sinc\Bigl( \frac h{\eps} \Bigr)^{-1}\biggr) \widehat B_0^2,
\end{align*}
where the rightmost expressions are obtained from a Rodriguez formula; see \cite[Appendix]{hairer20afb}. Here, $\tanc(\xi)=\tan(\xi)/\xi$ and $\sinc(\xi)=\sin(\xi)/\xi$.
The filter matrices $\Psi$ and $\Phi$ are symmetric and act as
the identity on vectors in the direction of $B_0$.

We put the filter matrix $\Psi$ in front of the right-hand side of \eqref{varint}:
\begin{align}\label{fvi}
 &\frac{x^{n+1}-2 x^n + x^{n-1}}{h^2} = 
 \\[2mm]
 \nonumber
 &\qquad \Psi \Bigl(A'(x^n)^\top \frac {(x^{n+1}-x^{n-1})}{2h}
-  \frac{A(x^{n+1})-A(x^{n-1})}{2h} +E^n \Bigr).
\end{align}
This is combined with the velocity approximation
\begin{equation}\label{vn-fvi}
v^n=\Phi \,\frac{x^{n+1}-x^{n-1}}{2h} + \eps \biggl(1-\sinc\Bigl( \frac h{\eps} \Bigr)^{-1}\biggr) E^n\times B_0.
\end{equation}
This filtered variational integrator coincides with the filtered Boris algorithm of \cite{hairer20afb} for the special case of a constant magnetic field $B(x)=B_0/\eps$. If additionally also $E$ is constant,
then this method yields the exact position and velocity, as was shown for the filtered Boris algorithm. 

For stepsizes $h$ with $\tan(h/(2\eps))\ge c >0$, the filter matrix $\Psi$ is positive definite. The above integrator can then be interpreted as a variational integrator corresponding to a discrete Lagrangian where the kinetic energy term has the modified mass matrix $\Psi^{-1}$. Its eigenvalues corresponding to the eigenvectors orthogonal to $B_0$ are $1/\tanc(h/(2\eps))$ and are thus proportional to $h/\eps$, which is greater than $h^{-1}$ under condition \eqref{eps-h2}. The discrete Lagrangian reads
\[
\begin{aligned}
L_h(x^n,x^{n+1})=&\ \frac{h}{2}(v^{n+1/2})^\top \Psi^{-1} v^{n+1/2}
\\
&\ +h\,\frac{A(x^n)^\top+A(x^{n+1})^\top}{2} \,v^{n+1/2}
-h\,\frac{\phi(x^n)+\phi(x^{n+1})}{2},
\end{aligned}
\]
where $v^{n+1/2} = (x^{n+1}-x^n)/h$. The standard (unfiltered) variational integrator has the same discrete Lagrangian except for the identity matrix in place of the matrix $\Psi^{-1}$.

The filtered variational integrator for \eqref{ode} can be written and implemented as the following implicit one-step  method:
$$
\begin{aligned}
v_{+}^{n-\tfrac{1}{2}}&=v^{n-\frac{1}{2}}+\frac{h}{2}\Psi E^n\\
v^{n+\tfrac{1}{2}}_{-}-v^{n-\frac{1}{2}}_{+}&=h\Psi
\biggl(\frac12 (v^{n+\frac{1}{2}}_{-}+v^{n-\tfrac{1}{2}}_{+})\times B^n 
\\
&\qquad\qquad 
+A'_1(x^n)\frac12 (v^{n+\tfrac{1}{2}}_{-}+v^{n-\tfrac{1}{2}}_{+})-\frac{A_1(x^{n+1})-A_1(x^{n-1})}{2h}\biggr)\\
v^{n+\tfrac{1}{2}}&=v^{n+\tfrac{1}{2}}_{-}+\frac{h}{2}\Psi E^n\\
x^{n+1}&=x^n+hv^{n+\tfrac{1}{2}}.
\end{aligned}
$$
This can be solved by a fixed-point iteration for $x^{n+1}$, where a good starting iterate is obtained from a Boris step.
The first velocity is chosen as follows:  we set $v^{1/2} = \bar v + \tfrac12 \delta v$ with $h\bar v =\tfrac12(x^1-x^{-1})$ and $h\,\delta v=x^1-2x^0+x^{-1}$ , where in view of \eqref{vn-fvi} for $n=0$,
$$
\bar v = \Phi^{-1} \Bigl( v^0 -  \eps \biggl(1-\sinc\Bigl( \frac h{\eps} \Bigr)^{-1}\biggr) E^n\times B_0 \Bigr)
$$
and $\delta v$ is implicitly determined  (and computed via fixed-point iteration) from \eqref{fvi} with $n=0$, i.e. from the equation
\begin{align*}
\delta v = h \Psi \Bigl( \bar v \times B(x^0) + A_1'(x^0)\bar v - \frac{A_1(x^{1})-A_1(x^{-1})}{2h} +E(x^0) \Bigr),
\end{align*}
where $x^{\pm1} = x^0 \pm h \bar v + \tfrac12 h \,\delta v$.

In contrast to the Boris algorithm and the unfiltered variational integrator, we here take the original initial data $v^0=\dot x(0)$ and $x^0=x(0)$.

\section{Modulated Fourier expansions}

We give modulated Fourier expansions of the
exact solution of \eqref{ode} and the numerical solutions of the three integrators for large step sizes $h^2\ge c\,\eps$ (in the following we set the irrelevant positive constant $c$ equal to 1 for simplicity). Analogous expansions for step sizes $h\le C\eps$ were previously given in \cite{hairer20lta,hairer20afb,hairer17smm}; see also \cite[Ch.\,XIII]{hairer06gni}. In particular, we explicitly state the differential equations for the dominant modulation functions up to $O(\eps^2)$  for the exact solution, and up to $O(h^2)$ for the numerical solutions.

\subsection{Modulated Fourier expansion of the exact motion}
We write the solution of \eqref{ode} as
\begin{equation}\label{exact_expansion}
x(t)\approx\sum_{k}z^k(t)e^{{\rm{i}}kt/\varepsilon}
\end{equation}
with coefficient functions $z^k(t)$ for which all time derivatives are bounded independently of $\varepsilon$. 

We diagonalize the linear map $v\mapsto v\times B_0$, which has eigenvalues $\lambda_1=\mathrm i $, $\lambda_0=0$ and $\lambda_{-1}=-\mathrm i $ (recall the normalization $|B_0|=1$). The normalized eigenvectors are denoted $v_1,v_0=B_0,v_{-1}=\overline v_1$. We let $P_j=v_jv_j^*$ be the orthogonal projections onto the eigenspaces. We write the coefficient functions of (\ref{exact_expansion}) in the basis $(v_j)$,
\[
z^k =z_1^k+z_0^k+z_{-1}^k, \quad z_j^k(t)=P_jz^k(t).
\]
The following theorem is a variant of Theorems 4.1 in \cite{hairer20lta,hairer20afb}, proved by the same arguments but in a technically simplified way, since here we have the constant frequency $1/\eps$ and constant projections $P_j$, as opposed to the state-dependent frequency and projections in \cite{hairer20lta,hairer20afb}.
\begin{theorem}\label{thm:mfe}
Let $x(t)$ be a solution of \eqref{ode} with an initial velocity bounded independently of $\eps$ $(|\dot x(0)|\le C_1)$, which stays in a compact set $K$ for $0\leq t\leq T$ (with $K$ and $T$ independent of $\eps$). For an arbitrary truncation index $N\geq 1$ we then have an expansion
\[
x(t)=\sum_{|k|\leq N}z^k(t)\mathrm e^{\mathrm i k t/\varepsilon}+R_N(t)
\]
with the following properties:
\begin{itemize}
\item [(a)] 
The modulation functions $z^k$ together with their derivatives (up to order $N$) are bounded as $z_j^0=O(1)$ for $j\in\{-1,0,1\}$, $z_1^1=O(\varepsilon)$, $z_{-1}^{-1}=O(\varepsilon)$, 
and for the remaining $(k,j)$ with $|k|\leq N$,
\[
z_j^k=O(\varepsilon^{|k|+1}).
\]
They are unique up to $O(\varepsilon^{N+2})$ and are chosen to satisfy $z^{-k}_{-j} = \overline{z^k_j}$.
Moreover, $\dot z^0_{\pm1}$ together with its derivatives is
  bounded as $\dot z^0_{\pm1} = \bigo (\eps )$.
\item [(b)] 
The remainder term and its derivative are bounded by
\[
R_N(t)=O(t^2\varepsilon^N),\quad \dot{R}_N(t)=O(t\varepsilon^N) \quad \text{for} \quad 0\leq t\leq T.
\]
\item [(c)]  
The functions $z_0^0$, $z_{\pm1}^0$, $z_1^1$, $z_{-1}^{-1}$ satisfy the differential equations
\[
\begin{aligned}
\ddot{z}^0_0&=P_0\bigl(\dot{z}^0\times B_1(z^0)+E(z^0)\bigr)+2P_0 \, {\rm Re}\Bigl(\frac{{\rm i}}{\varepsilon}\,z^1\times B_1'(z^0)z^{-1}\Bigr)+O(\varepsilon^2),\\
\dot{z}_{\pm1}^0&=\pm{\rm i}\varepsilon P_{\pm 1}\bigl(\dot{z}^0\times B_1(z^0)+E(z^0)\bigr)+O(\varepsilon^2),\\[1mm]
\dot{z}^{\pm 1}_{\pm 1}&=P_{\pm 1}\bigl(z^{\pm 1}_{\pm 1}\times B_1(z^0)\bigr)+O(\varepsilon^2).
\end{aligned}
\]
All other modulation functions $z_j^k$ are given by algebraic expressions depending on $z^0$, $\dot{z}_0^0$, $z_1^1$, $z_{-1}^{-1}$.
\item [(d)]  
Initial values for the differential equations of item (c) are given by
\[
\begin{aligned}
z^0(0)&=x(0)+\varepsilon \dot{x}(0)\times B_0+O(\varepsilon^2),\\
\dot{z}_0^0(0)&=P_0\dot{x}(0) -\eps P_0 \bigl((\dot x(0)\times B_0)\times B_1(x(0))\bigr) + O(\varepsilon^2),\\
z_{\pm 1}^{\pm 1}(0)&=\mp{\mathrm i}{\varepsilon} P_{\pm 1}\dot{x}(0)+O(\varepsilon^2).
\end{aligned}
\]
\end{itemize}
The constants symbolized by the $O$-notation are independent of $\varepsilon$ and $t$ with $0\leq t\leq T$, but depend on $N$, on the velocity bound $M$, on bounds of derivatives of $B_1$ and $E$ on the compact set $K$, and on $T$.
\end{theorem}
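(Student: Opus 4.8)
The plan is to carry out the standard construction of a modulated Fourier expansion, as in \cite[Ch.\,XIII]{hairer06gni} and in the proofs of Theorems~4.1 in \cite{hairer20lta,hairer20afb}, which is technically simpler here because the frequency $1/\eps$ and the projections $P_j$ do not depend on the state. Insert the ansatz $x(t)=\sum_{|k|\le N}z^k(t)\,\e^{\iu kt/\eps}$ into \eqref{ode}; then $\dot x=\sum_k w^k\e^{\iu kt/\eps}$ with $w^k=\dot z^k+(\iu k/\eps)z^k$, and $\ddot x$ is expressed similarly. Taylor-expanding $B_1(x(t))$ and $E(x(t))$ about $z^0(t)$ writes them as $\sum_l c^l(t)\,\e^{\iu lt/\eps}$ and $\sum_l d^l(t)\,\e^{\iu lt/\eps}$, where the coefficients are polynomials in the $z^m$ with $m\ne0$ and in the derivatives of $B_1,E$ at $z^0$. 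Comparing the coefficients of each $\e^{\iu kt/\eps}$, projecting with $P_j$, and using $P_j(v\times B_0)=\lambda_j P_j v$, one gets for every pair $(k,j)$ a scalar equation whose singular-in-$\eps$ part is $-\eps^{-2}(k^2+\iu k\lambda_j)z_j^k+\eps^{-1}(2\iu k-\lambda_j)\dot z_j^k$ and whose right-hand side is $P_j\bigl(\sum_{k'+l=k}w^{k'}\times c^l\bigr)+P_j d^k$. Since $k^2+\iu k\lambda_j$ equals $k(k-j)$ for $j=\pm1$ and $k^2$ for $j=0$, the coefficient of $\eps^{-2}z_j^k$ vanishes exactly for the five resonant pairs $(k,j)\in\{(0,0),(0,1),(0,-1),(1,1),(-1,-1)\}$. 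For every other $(k,j)$ with $|k|\le N$ one solves the equation for $z_j^k$ as an algebraic expression in the remaining modulation functions and their derivatives, with a nonzero $\eps$-independent denominator; for the five resonant pairs the leading equation is differential — of second order for $z_0^0$ (both singular coefficients vanish), and of first order for $z_{\pm1}^0$ and $z_{\pm1}^{\pm1}$ (only the $\eps^{-2}z_j^k$ coefficient vanishes, leaving a nonzero multiple of $\eps^{-1}\dot z_j^k$).

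The estimates of item~(a) and the reduced differential equations of item~(c) then follow from an induction on the $\eps$-order that alternates between inserting the algebraic relations — each non-resonant $z_j^k$ being one power of $\eps$ smaller than the resonant modes it depends on — and evaluating the right-hand sides of the five resonant equations. The crucial bookkeeping is that for $|k|=1$ the fast velocity coefficient $w^{\pm1}=\dot z^{\pm1}+(\iu(\pm1)/\eps)z^{\pm1}$ is only $O(1)$, because $z_{\pm1}^{\pm1}=O(\eps)$; hence the single coupling term surviving at order $\eps$ in the $z_0^0$-equation is $2P_0\Re\bigl(\tfrac{\iu}{\eps}z^1\times B_1'(z^0)z^{-1}\bigr)$, which is $\eps^{-1}\cdot O(\eps)\cdot O(\eps)=O(\eps)$, whereas the analogous couplings in the first-order equations for $z_{\pm1}^0$ and $z_{\pm1}^{\pm1}$ are $O(\eps)$ and disappear into the $O(\eps^2)$ remainder once multiplied by the $\eps$ picked up when $\eps^{-1}\dot z_j^k$ is inverted. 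This gives $z_0^0=O(1)$, $z_{\pm1}^0=O(1)$ with $\dot z_{\pm1}^0=O(\eps)$, $z_{\pm1}^{\pm1}=O(\eps)$, and $z_j^k=O(\eps^{|k|+1})$ for the remaining $(k,j)$; the reality symmetry $z^{-k}_{-j}=\overline{z^k_j}$ is consistent with the construction, and the modulation functions are unique up to $O(\eps^{N+2})$ by the same iteration.

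For item~(d), match the expansion at $t=0$: substituting the algebraic relations and the size bounds into $x(0)=\sum_k z^k(0)$ and $\dot x(0)=\sum_k w^k(0)$ reduces these to a linear system for $z^0(0),\dot z_0^0(0),z_{\pm1}^{\pm1}(0)$ that is an $\eps$-small perturbation of an invertible one; solving it and keeping terms up to $O(\eps^2)$ gives the stated initial values, in particular $z^0(0)=x(0)+\eps\,\dot x(0)\times B_0+O(\eps^2)$ (using $\iu(P_1-P_{-1})v=v\times B_0$). Finally, with the free initial parameters of the resonant modulation functions chosen so that $\widetilde x(t)=\sum_{|k|\le N}z^k(t)\,\e^{\iu kt/\eps}$ matches the initial data, one checks that $\widetilde x$ solves \eqref{ode} up to a defect $d_N$ with $\|d_N(t)\|=O(\eps^N)$ — the singular-in-$\eps$ parts cancel frequency by frequency, and the discarded frequencies $|k|>N$ contribute $O(\eps^N)$. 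Writing \eqref{ode} as a first-order system and factoring the bounded rotation generated by $\widehat B_0/\eps$ out of $\dot R_N$ makes the $1/\eps$ in front of $B_0$ harmless, while the remaining right-hand side is Lipschitz in $(R_N,\dot R_N)$ with an $\eps$-uniform constant (the $B_0/\eps$ term cancels in $B(x)-B(\widetilde x)$, and $\dot{\widetilde x}$ stays bounded). Since $R_N(0)=\dot R_N(0)=0$, a Gronwall argument for $\|\dot R_N\|+\|R_N\|$ yields $\dot R_N(t)=O(t\eps^N)$, and then $R_N(t)=\int_0^t\dot R_N\,\d s=O(t^2\eps^N)$ on $[0,T]$.

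The main obstacle, I expect, is the induction of the second paragraph: making it rigorous while tracking exactly which resonant couplings persist at order $\eps$ and which disappear into the $O(\eps^2)$ remainder, and checking at each stage that the algebraic relations for the non-resonant modes have nonzero, $\eps$-independent denominators. The initial-value computation and the Gronwall estimate for $R_N$ are then routine perturbation arguments.
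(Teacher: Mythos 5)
Your proposal is correct and follows essentially the same route as the paper, which itself only indicates that Theorem~\ref{thm:mfe} is proved "by the same arguments" as Theorems 4.1 of \cite{hairer20lta,hairer20afb}, simplified by the constant frequency $1/\eps$ and constant projections $P_j$: insert the ansatz, compare coefficients of $\e^{\iu kt/\eps}$, project with $P_j$, identify the five resonant pairs where $k(k-j)=0$, solve the non-resonant components algebraically and the resonant ones by the reduced first- and second-order differential equations, then match initial data and close with a defect-plus-Gronwall estimate for $R_N$. Your resonance bookkeeping (coefficients $-k(k-j)/\eps^2$ and $(2\iu k-\lambda_j)/\eps$), the surviving $O(\eps)$ coupling term in the $z^0_0$-equation, and the identity $\iu(P_1-P_{-1})v=v\times B_0$ for the initial values all check out against the statement.
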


\subsection{Resonant modulated Fourier expansion of the Boris algorithm and the standard variational integrator for  $h^2\ge \eps$}
When the Boris method is applied to the linear differential equation $\ddot x = \dot x \times B_0/\eps$ with $|B_0|=1$ (that is, $B_1$ and $E$ are not present in \eqref{ode}), then diagonalization of $B_0$ shows that $x^n$ is a linear combination (with coefficients independent of $n$) of terms $1$, $nh$ and $\e^{\pm \iu nh\omega}$ , where
$$
h\omega  = 2\arctan\Bigl(\frac h{2\eps}\Bigr).
$$
If $h/\eps$ is large, then $h\omega $ is close to $\pi$. In particular, if $h^2 \ge \eps$, then $h\omega  = \pi - \gamma h$ with $\gamma>0$ bounded independently of $h$ and $\eps$ with $h^2\ge \eps$, and so
$\e^{\pm \iu nh\omega }= (-1)^n \e^{\mp \iu nh\gamma}$, where we note that $\e^{\mp \iu t\gamma}$ is a smooth function of $t$ all of whose derivatives are bounded independently of $\eps$ and $h$. In the general case of \eqref{ode}, we have the following result.

\begin{theorem}\label{thm:mfe-boris}
Let $x^n$ be the numerical solution obtained by applying either the Boris algorithm or the variational integrator to \eqref{ode} with a stepsize $h$ satisfying
\begin{equation}\label{hh-eps}
h^2 \ge \eps.
\end{equation}
We assume that the starting velocity $v^0$ is bounded independently of $\eps$ and $h$ and that its component orthogonal to $B_0$, 
i.e. $v^0_\perp=(I-P_0)v^0$, is small:
\begin{equation}\label{v-init-boris}
|v^0_\perp| \le c_1 \eps .
\end{equation}
We further assume that the numerical solution $x^n$ stays in a compact set $K$ for $0\leq nh\leq T$ (with $K$ and $T$ independent of $\eps$ and $h$). For an arbitrary truncation index $N\geq 2$, we then have a decomposition
\begin{equation} \label{mfe-boris}
x^n=y(t) + (-1)^n z(t) +R_N(t), \qquad t=nh,
\end{equation}
with the following properties:
\begin{itemize}
\item [(a)] 
The  functions $y(t)$ and $z(t)$ together with their derivatives (up to order $N$) are bounded as $y=O(1)$, $z=O(h^2)$. They are unique up to $O(\varepsilon^{N+2})$. 
Moreover, we have $\dot{y}\times B_0=O
(\varepsilon)$ and $z\cdot B_0 =O(h^4)$.
\item [(b)] 
The remainder term is
bounded by
\[
R_N(t)=O(t^2 h^N) 
\quad \text{for} \quad 0\leq t\leq T.
\]
\item [(c)]  
The functions $y_j=P_j y$ $(j=0,\pm1)$ and $z_{\pm 1}=P_{\pm1}z$ satisfy the differential equations
\[
\begin{aligned}
\ddot{y}_0&=P_0\bigl(\dot{y}\times B_1(y)+E(y)\bigr)+O(h^2),\\
\dot{y}_{\pm1}&=\pm{\rm i}\varepsilon P_{\pm 1}\bigl(\dot{y}\times B_1(y)+E(y)\bigr)+O(\eps h^2),\\[1mm]
\dot{z}_{\pm 1}&=\mp 4\iu \frac\eps{h^2} {z}_{\pm 1} + O(\eps h^2).
\end{aligned}
\]
The function $z_0=P_0z$ is given by an algebraic expression depending on $y$, $\dot{y}_0$ and $z_{\pm 1}$.
\item [(d)]  
Initial values for the differential equations of item (c) are given by
\[
\begin{aligned}
y(0)&= x^0+O(h^2),\\
\dot{y}_0(0)&=P_0 v^0 +O(h^2),
\\
z_{\pm 1}(0)&=\mp\frac{\iu h^2}{4\eps} \,P_{\pm 1}\Bigl( v^0 \mp \iu\eps \bigl(P_0v^0\times B_1(x^0)+E(x^0)\bigr)\Bigr)
+O(h^4).
\end{aligned}
\]
\end{itemize}
The constants symbolized by the $O$-notation are independent of $\varepsilon$, $h$ and $n$ with $0\leq nh \leq T$, but depend on the velocity bound, on bounds of derivatives of $B_1$ and $E$ on the compact set $K$, and on $T$.
\end{theorem}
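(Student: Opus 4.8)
\noindent The plan is to follow the standard two-stage construction of a modulated Fourier expansion — a formal construction of the modulation functions and of the equations they satisfy, followed by a remainder estimate — in the \emph{resonant} form forced by the identity
\[
h\omega=2\arctan\!\Bigl(\tfrac h{2\eps}\Bigr)=\pi-\gamma h,\qquad \gamma=\tfrac{4\eps}{h^2}+O\bigl((\eps/h)^3\bigr),
\]
so that the single relevant modulation frequency $\pi/h$ is in exact resonance with the period-two pattern $n\mapsto n+2$ and the associated modulation function $z(t)$ is slowly varying, $0<\gamma\le4$ under \eqref{hh-eps}.

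Before the construction I would record that the variational integrator \eqref{varint-borislike} is the Boris scheme \eqref{eq:Boris} plus the term $A_1'(x^n)\tfrac{x^{n+1}-x^{n-1}}{2h}-\tfrac{A_1(x^{n+1})-A_1(x^{n-1})}{2h}$, a quadrature-type defect that carries no factor $1/\eps$, vanishes for affine $A_1$, and is $O(h^2)$; evaluated on the ansatz below it contributes $O(h^2)$ to the $y$-equation and $O(\eps h^2)$ to the $z_{\pm1}$-equation, so both methods are handled at once. Now insert $x^n=y(t)+(-1)^nz(t)$ with $t=nh$ into the scheme, use $(-1)^{n\pm1}=-(-1)^n$, and Taylor-expand the shifts $y(t\pm h),z(t\pm h)$ together with $B_1(x^n)=B_1(y)+(-1)^nB_1'(y)z+\cdots$ and $E(x^n)=E(y)+(-1)^nE'(y)z+\cdots$ (licit since $z=O(h^2)$ stays near $K$). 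Separating the $(-1)^n$-free and the $(-1)^n$-proportional parts gives two equations. The first reads $\ddot y+O(h^2)=\tfrac1\eps\dot y\times B_0+\dot y\times B_1(y)+E(y)+O(h^2)$; its $O(1)$ left-hand side forces $\dot y\times B_0=O(\eps)$, and the $P_0$- and $P_{\pm1}$-projections then produce the second-order equation for $y_0$ and the first-order equations for $y_{\pm1}$ (for the latter the $P_{\pm1}$-part of the singular term dominates, exactly as in Theorem~\ref{thm:mfe}). The second reads $\bigl(\tfrac4{h^2}I+\partial_t^2+O(1)\bigr)z=-\tfrac1\eps\dot z\times B_0+O(1)z+O(h^2)$; diagonalising $v\mapsto v\times B_0$, its $P_{\pm1}$-blocks become the first-order ODEs $\dot z_{\pm1}=\mp\tfrac{4\iu\eps}{h^2}z_{\pm1}+O(\eps h^2)$ with bounded coefficient $4\eps/h^2\le4$ (hence $z_{\pm1}=O(h^2)$), while the $P_0$-block is the invertible relation $\tfrac4{h^2}z_0=O(h^2)$, giving $z_0=O(h^4)$ as an explicit expression in $y,\dot y_0,z_{\pm1}$. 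One then builds $y$ and $z$ as truncated $h$-series with $t$-dependent coefficients depending only on the bounded ratio $\eps/h^2\le1$, solving at each order the linear problems just listed (multiplication by $4/h^2$ on the $P_0$-block, a scalar first-order ODE on the $P_{\pm1}$-block of $z$, a regular second-order ODE on the $y$-block with the constraint $\dot y_\perp=O(\eps)$ propagated); no small divisors occur because the unique modulation frequency has been absorbed into $(-1)^n$. This yields (a) and uniqueness up to $O(\eps^{N+2})$, while the initial values in (d) follow by matching $x^0$, $x^1$ and the prescribed first velocity against $y(0)\pm z(0)$, $\dot y(0)\mp\dot z(0)$ up to the stated order, the $O(\eps)$ terms inside $z_{\pm1}(0)$ coming from expanding the first Boris step consistently and using $|v^0_\perp|\le c_1\eps$ from \eqref{v-init-boris}.

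For the remainder I would substitute $x^n=y(t)+(-1)^nz(t)+R_N(t)$, subtract the equations satisfied by $y,z$, and observe that the defect left by the truncated expansion is $O(h^N)$ in the $\tfrac{x^{n+1}-2x^n+x^{n-1}}{h^2}=\cdots$ form, so that $R_N$ obeys a two-step recursion which is a bounded perturbation of the linear recursion whose fundamental solutions are $1$, $nh$ and $(-1)^n\e^{\mp\iu\gamma t}$, all of size $O(t)$ on $[0,T]$. A discrete variation-of-constants / Gronwall estimate for this second-order-in-$n$ recursion with zero initial data then yields $R_N(t)=O(t^2h^N)$, the constant depending only on $T$, on $K$, on the derivative bounds of $B_1,E$, and on $N$.

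I expect the crux to be twofold. First, one must check that the formal construction never produces the unbounded ratio $h^2/\eps$ (only $\eps/h^2\le1$): this rests on seeing precisely how the singular term $B_0/\eps$ enters the $z$-equation and is balanced by the $4/h^2$ coming from $z(t+h)+2z(t)+z(t-h)$, so that $z_{\pm1}=O(h^2)$ and not $O(h^2/\eps)$. Second, one must establish the stability of the perturbed two-step recursion uniformly in $\eps,h$ with $h^2\ge\eps$ — that is, that its linear part has a fundamental system bounded by $O(t)$ although $h\omega$ lies only $O(h)$ from the resonant value $\pi$. The rest is lengthy but routine Taylor bookkeeping, structurally parallel to the proof of Theorem~\ref{thm:mfe}.
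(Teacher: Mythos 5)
Your proposal is correct and follows essentially the same route as the paper: insert the parity-split ansatz \eqref{mfe-boris} into the two-step formulation, Taylor-expand, separate the $(-1)^n$-free and $(-1)^n$-proportional parts, and project with $P_0$, $P_{\pm1}$ so that the $4/h^2$ coming from the second difference of $(-1)^nz(t)$ balances the $B_0/\eps$ term and yields the slow equations of (c), the relation $z_0=O(h^4)$, and the initial values of (d). The paper's printed proof only carries out (c) and (d), deferring the bounds of (a)--(b) to earlier modulated-Fourier-expansion references, so your sketch of the iterative construction and of the remainder recursion goes somewhat beyond what is written there but is consistent with it.
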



We note that the differential equations for $y$ agree with those for $z^0$ of the exact solution up to $O(h^2)$. The differential equations for $z_{\pm1}$ and for $z^{\pm 1}_{\pm 1}$ of the exact solution differ, but we still have
$$
\frac{\d}{\d t}\, |z_{\pm 1}|^2 = 2\,\Re z_{\pm 1}^* \dot z_{\pm 1} = O\bigl( \eps |z_{\pm 1}|^2 \bigr) + O(\eps h^N) = O(\eps h^4),
$$
which is to be compared with
$$
\frac{\d}{\d t}\, |z^{\pm 1}_{\pm 1}|^2 = 2\,\Re (z^{\pm 1}_{\pm 1})^* \dot z^{\pm 1}_{\pm 1} = O(\eps^3).
$$
To obtain an $O(h^2)$ approximation to the guiding centre $z^0(t)$ over bounded time intervals, we run the Boris algorithm with the modified initial velocity $v^0= P_0\dot x(0)$ instead of $\dot x(0)$, or even better, determine $P_{\pm1}v^0$ such that $z_{\pm 1}(0)=O(h^4)$, which holds true with the proposed choice \eqref{v-init-mod}. 

\begin{proof} The bounds of parts (a) and (b) are proved as in previous proofs of modulated Fourier expansions; see e.g. \cite{hairer20lta} and \cite[Ch.\,XIII]{hairer06gni}. Here we just show (c) and (d), assuming that the bounds of (a) and (b) are already available. 

To derive the differential equations of (c), we insert \eqref{mfe-boris} into the two-step formulation of the numerical method, expand $y(t\pm h)$ and $z(t\pm h)$ into Taylor series at $t$, expand the nonlinear functions $B_1$ and $E$ at $y(t)$ and separate the terms without and with the factor $(-1)^n$. This gives us the equations
\begin{align*}
\ddot y + O(h^2) &= \dot y \times \frac{B_0}\eps + \dot y \times B_1(y) + E(y) + O(h^2)
\\
-\frac 4{h^2} z - \ddot z + O(h^2) &= -\dot z \times \frac{B_0}\eps + \dot z \times B_1(y) + \dot y \times B_1'(y) z + E'(y) z + O(h^2).
\end{align*}
In the equation for $z$ we note that also $\ddot z$ and the last three terms on the right-hand side are $O(h^2)$ as $z$ and its derivatives are $O(h^2)$, and the indicated $O(h^2)$ terms are then actually $O(h^4)$.

Taking the projection $P_0$ on both sides of the differential equation for $y$ yields the stated second-order differential equation for $y_0$ on noting that $P_0( \dot y \times {B_0})=0$.
 Moreover, since 
$P_{\pm1}(\dot y \times B_0) = \pm\iu \dot y_{\pm 1}$, we obtain
$$
\mp \frac\iu\eps \dot y_{\pm 1} = - \ddot y_{\pm 1} + P_{\pm1} \bigl(\dot{y}\times B_1(y)+E(y)\bigr)+O(h^2).
$$
Differentiating this equation and multiplying with $\iu\eps$ yields $\ddot y_{\pm 1}=O(\eps)$, which is $O(h^2)$ under condition \eqref{hh-eps}. So we obtain the stated first-order differential equation for $y_{\pm1}$.

Taking the projection $P_0$ in the above equation for $z$ yields
$-\frac 4{h^2} z_0 = O(h^2)$, and hence $z_0=O(h^4)$. Taking the projections $P_{\pm1}$ yields
$$
-\frac 4{h^2} z_{\pm1} = \mp \frac\iu\eps \dot z_{\pm1} + O(h^2),
$$
which can be rearranged into the stated differential equation for $z_{\pm1}$.

In view of \eqref{mfe-boris} for $n=0$ and $z(0)=O(h^2)$, we have $y(0)=x^0 + O(h^2)$.
Since we obtain by inserting  \eqref{mfe-boris} for $n=-1,1$
$$
v^0 =\frac{x^1-x^{-1}}{2h} = \dot y(0) - \dot z(0) + O(h^2) ,
$$ 
we obtain  the stated expression for $\dot y_0(0)$ on taking the projection $P_0$. 
Taking the projections $P_{\pm1}$ and using the differential equations for $y_{\pm1}$ and $z_{\pm1}$, we arrive at the stated expression for
$z_{\pm1}(0)$.
\qed
\end{proof}

\subsection{Non-resonant modulated Fourier expansion of the filtered variational integrator for $h^2\ge \eps$}

As the filtered integrator is exact for the linear equation $\ddot x = \dot x \times B_0/\eps$, it has the same high frequency $1/\eps$. When this integrator is applied to \eqref{ode}, it has a modulated Fourier equation that is very similar to that of the exact solution given in Theorem~\ref{thm:mfe}.

\begin{theorem}\label{thm:mfe-fvi}
Let $x^n$ be a solution of the filtered variational integrator applied to \eqref{ode} with a stepsize $h$ satisfying
\begin{equation}\label{hh-fvi}
h^2 \ge \eps 
\end{equation}
and, for some $N\ge 1$, the non-resonance conditions 
\begin{equation}
\label{nonres}
\begin{aligned}
&\displaystyle\Bigl|\sin\Bigl(\frac{kh}{2\eps}\Bigr)\Bigr| \ge c >0, \quad
\Bigl|\cos\Bigl(\frac{kh}{2\eps}\Bigr)\Bigr| \ge c >0  \qquad (k=1,\dots,N),
\\
&\displaystyle\Bigl|\tan\Bigl(\frac{kh}{2\eps}\Bigr)- \tan\Bigl(\frac{h}{2\eps}\Bigr)\Bigr| \ge c >0
 \qquad (k=2,\dots,N),
\end{aligned}
\end{equation}
where $c$ is a positive constant.
We assume that the initial velocity $v^0=\dot x(0)$ is bounded independently of $\eps$ and $h$, as in \eqref{init}. We further assume that the numerical solution $x^n$ stays in a compact set $K$ for $0\leq nh\leq T$ (with $K$ and $T$ independent of~$\eps$ and~$h$).
We then have an expansion, at $t=nh$,
\begin{equation}\label{mfe-fvi}
x^n=\sum_{|k|\leq N}z^k(t)\mathrm e^{\mathrm i k t/\varepsilon}+R_N(t)
\end{equation}
with the following properties:
\begin{itemize}
\item [(a)] The bounds of parts (a) of Theorem~\ref{thm:mfe} for the modulation functions are valid also in this case, except $z^k_0=O(h\eps^{|k|})$ for $|k|\ge 1$.
\item[(b)] The remainder at $t=nh$ is bounded, for arbitrary $M>1$, by
$$
P_0 R_N(t) = O(t^2 h^M) + O(t^2 \eps^N),             \quad P_{\pm 1} R_N(t) = O(t^2 \eps h^{M-1}) + O(t^2\eps^N).
$$
\item [(c)]  
The functions $z_0^0$, $z_{\pm1}^0$, $z_1^1$, $z_{-1}^{-1}$ satisfy the differential equations
\[
\begin{aligned}
\ddot{z}^0_0&=P_0\bigl(\dot{z}^0\times B_1(z^0)+E(z^0)\bigr)+O(h^2),\\
\dot{z}_{\pm1}^0&=\pm{\rm i}\varepsilon P_{\pm 1}\bigl(\dot{z}^0\times B_1(z^0)+E(z^0)\bigr)+O(\eps h),\\[1mm]
\dot{z}^{\pm 1}_{\pm 1}&=\frac{\eps}h\, \sin\Bigl(\frac h\eps \Bigr) P_{\pm1}\bigl(z^{\pm 1}_{\pm 1}\times B_1(z^0)\bigr) +O(\eps^2).
\end{aligned}
\]
All other modulation functions $z_j^k$ are given by algebraic expressions depending on $z^0$, $\dot{z}_0^0$, $z_1^1$, $z_{-1}^{-1}$.
\item [(d)]  
Initial values for the differential equations of item (c) are given by
\[
\begin{aligned}
z^0(0)&=x^0+O(h^2), \\ 
\dot{z}_0^0(0)&=P_0v^0 + O(h^2),\\
z_{\pm 1}^{\pm 1}(0)&=\mp{\mathrm i}{\varepsilon} P_{\pm 1}v^0+O(\varepsilon h).
\end{aligned}
\]
\end{itemize}
The constants symbolized by the $O$-notation are independent of $\varepsilon$ and $t$ with $0\leq t\leq T$, but depend on $m$ and $N$, on the velocity bound~\eqref{init}, on bounds of derivatives of $B_1$ and $E$ on the compact set $K$, and on $T$.
\end{theorem}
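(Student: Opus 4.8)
\emph{Proof plan.} The plan is to follow the standard route for modulated Fourier expansions (as in \cite{hairer20lta} and \cite[Ch.\,XIII]{hairer06gni}), adapted to the filtered scheme and to the scaling $h^2\ge\eps$. The a priori bounds (a) and (b) are obtained exactly as there: substitute the truncated sum $\sum_{|k|\le N}z^k(t)\mathrm e^{\mathrm i kt/\eps}$ into the two-step form \eqref{fvi}, expand in powers of $\eps$ and $h$, and solve the resulting hierarchy for $z^k_j=P_jz^k$; under \eqref{nonres} (together with the size hierarchy of the modes) the coefficient determining each modulation function below the truncation level is invertible with inverse of size $O(h^2)$, and a discrete stability estimate for the scheme linearised along the modulated solution --- uniform in $\eps,h$ and again resting on \eqref{nonres} --- yields the remainder bounds of (b) for arbitrary $M$. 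Below I describe only the derivation of (c) and (d) for the dominant functions $z^0_0,z^0_{\pm1},z^1_1,z^{-1}_{-1}$, taking (a), (b) for granted.

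The starting point is that $\Psi$ and $\Phi$, being functions of the skew matrix $\widehat B_0$, are diagonal in the eigenbasis $(v_1,v_0,v_{-1})$: on $P_{\pm1}$ they act as multiplication by $\tanc(h/(2\eps))=\tfrac{2\eps}h\tan(h/(2\eps))$ and $1/\sinc(h/\eps)=\tfrac{h/\eps}{\sin(h/\eps)}$, of respective sizes $\Theta(\eps/h)$ and $\Theta(h/\eps)$ under \eqref{nonres}, and as the identity on $P_0$. Insert \eqref{mfe-fvi} into \eqref{fvi}, Taylor-expand $z^k(t\pm h)$ about $t$ (a shift by $\pm h$ producing the scalar factor $\mathrm e^{\pm\mathrm i kh/\eps}$, not $\pm\mathrm i k/\eps$), expand $B_1,A_1,E$ about $z^0(t)$, collect the coefficients of $\mathrm e^{\mathrm i kt/\eps}$, project with $P_j$, and divide by $h^2$; this gives for each $(k,j)$ an equation
\[
\alpha_{k,j}\,z^k_j+\beta_{k,j}\,\dot z^k_j+\gamma_{k,j}\,\ddot z^k_j+\dots=\big[\text{lower-order, nonlinear}\big]_{k,j}.
\]
Using $\Psi$ to collapse the cross products with $B_0/\eps$ to scalars on $P_{\pm1}$, one finds $\gamma_{0,0}=1$, $\alpha_{0,j}=0$, and for $k\neq0$
\begin{gather*}
\alpha_{k,0}=-\tfrac4{h^2}\sin^2\!\big(\tfrac{kh}{2\eps}\big),\qquad\alpha_{\pm1,\pm1}=0,\\
\alpha_{k,\pm1}=-\tfrac2{h^2}\,\sin\!\big(\tfrac{kh}\eps\big)\Big(\tan\!\big(\tfrac{kh}{2\eps}\big)\mp\tan\!\big(\tfrac h{2\eps}\big)\Big)\quad(k\neq0,\pm1),
\end{gather*}
the vanishing $\alpha_{\pm1,\pm1}=0$ --- the discrete second-difference coefficient $2(\cos(h/\eps)-1)/h^2$ being cancelled by the filtered rotation term $\tfrac1{h\eps}\tanc(h/(2\eps))\sin(h/\eps)$, of opposite sign and equal magnitude --- being precisely the algebraic reason for the choice of $\Psi$. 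By \eqref{nonres} every $\alpha_{k,j}$ with $k\neq0$ outside the resonant pair has size $\Theta(h^{-2})$, so $z^k_j$ is determined algebraically from lower modes by a convergent fixed-point iteration; this makes all $z^k_j$ algebraic in $z^0,\dot z^0_0,z^1_1,z^{-1}_{-1}$, and, since $\alpha_{k,0}$ carries no compensating amplification from $\Psi$ while the relevant nonlinear term picks up the anomalous factor $\tfrac1h\sin(kh/\eps)\,z^k$ coming from the discrete velocity, it also forces the amended bound $z^k_0=O(h\eps^{|k|})$.

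It remains to read off the four free equations. Projecting the $k=0$ equation with $P_0$ gives $\ddot z^0_0=P_0(\dot z^0\times B_1(z^0)+E(z^0))+O(h^2)$; the product contribution that in the exact expansion produces $2P_0\Re(\tfrac{\mathrm i}\eps z^1\times B_1'(z^0)z^{-1})$ is here only $O(\eps^2/h)=O(h^2)$ and disappears into the error. Projecting with $P_{\pm1}$ gives a stiff equation whose leading balance, since $\alpha_{0,\pm1}=0$ and the $\ddot z^0_{\pm1}$-term is of lower order, is $\mp\tfrac{2\mathrm i}h\tan(h/(2\eps))\,\dot z^0_{\pm1}=\tanc(h/(2\eps))\,P_{\pm1}(\dot z^0\times B_1(z^0)+E(z^0))$; inserting $\tanc(h/(2\eps))=\tfrac{2\eps}h\tan(h/(2\eps))$ reduces this to $\dot z^0_{\pm1}=\pm\mathrm i\eps\,P_{\pm1}(\dot z^0\times B_1(z^0)+E(z^0))+O(\eps h)$. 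For the resonant pair the $z^k_j$-term has vanished, so the leading balance is $\pm\tfrac{2\mathrm i}h\tan(h/(2\eps))\,\dot z^{\pm1}_{\pm1}=\tanc(h/(2\eps))\,P_{\pm1}\big(\tfrac{\mathrm i}h\sin(h/\eps)\,z^{\pm1}_{\pm1}\times B_1(z^0)\big)+\dots$, and the same substitution yields $\dot z^{\pm1}_{\pm1}=\tfrac\eps h\sin(h/\eps)\,P_{\pm1}(z^{\pm1}_{\pm1}\times B_1(z^0))+O(\eps^2)$. The starting values (d) follow by evaluating the representation at $n=0,\pm1$: $x^0=\sum_k z^k(0)$ with $z^k(0)=O(\eps)$ for $k\neq0$ gives $z^0(0)=x^0+O(\eps)=x^0+O(h^2)$; for the velocity use \eqref{vn-fvi} at $n=0$, where applying $\Phi$ to $\tfrac{x^1-x^{-1}}{2h}$ turns, on $P_{\pm1}$, the discrete-derivative value $\pm\tfrac{\mathrm i}h\sin(h/\eps)z^{\pm1}_{\pm1}(0)+O(h\eps)$ into $\pm\tfrac{\mathrm i}\eps z^{\pm1}_{\pm1}(0)+O(h^2)$ (the factor $1/\sinc(h/\eps)$ cancelling $\sin(h/\eps)/h$ and turning the Taylor remainder into $O(h^2)$), so adding the $O(h)$ term $\eps(1-\sinc(h/\eps)^{-1})E^0\times B_0\in P_{\pm1}$ and solving gives $z^{\pm1}_{\pm1}(0)=\mp\mathrm i\eps\,P_{\pm1}v^0+O(\eps h)$, while $P_0$ gives $\dot z^0_0(0)=P_0v^0+O(h^2)$.

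The main obstacle I anticipate is the bookkeeping of orders in $\eps$ and $h$ under $h^2\ge\eps$: each use of $\Psi$ or $\Phi$ inserts a factor $(\eps/h)^{\pm1}$ and the discrete velocity of an oscillatory mode carries $1/h$ rather than the continuous $1/\eps$, so that several contributions that are leading-order in the exact expansion become negligible here, while the amended bound $z^k_0=O(h\eps^{|k|})$ and the modified coefficient $\tfrac\eps h\sin(h/\eps)$ in the equation for $z^{\pm1}_{\pm1}$ both emerge precisely from these cancellations. One must also check that \eqref{nonres} is exactly what keeps every non-resonant $\alpha_{k,j}$ of size $\Theta(h^{-2})$ --- the conditions combining the $\sin$-factor from the discrete operator with the $\tan$-difference factor of $\alpha_{k,\pm1}$ above, and all mode--direction pairs (also those related by the conjugate symmetry $z^{-k}_{-j}=\overline{z^k_j}$, and the borderline mode with $|k|=N$) having to be accounted for --- and establish the uniform discrete stability estimate for the linearised scheme needed in part (b).
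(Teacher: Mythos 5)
Your proposal is correct and follows essentially the same route as the paper's own proof: parts (a) and (b) are deferred to the standard modulated-Fourier machinery, and (c), (d) are obtained by inserting the ansatz into the two-step form, using the diagonalization of $\widehat B_0$ so that $\Psi$ and $\Phi$ act as the scalars $\tanc(h/(2\eps))$ and $1/\sinc(h/\eps)$ on $P_{\pm1}$, expanding the difference quotients of $z^k(t)\e^{\iu kt/\eps}$, and exploiting the cancellation of the $-\tfrac{4}{h^2}\sin^2(h/(2\eps))$ terms for the resonant pair (your identification of $\alpha_{\pm1,\pm1}=0$ is exactly the paper's identity $\sin(2\xi)-\tan(\xi)\cos(2\xi)=\tan(\xi)$) together with the non-resonance conditions for the remaining modes. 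Your bookkeeping of the coefficients $\alpha_{k,j}$, the origin of the amended bound $z^k_0=O(h\eps^{|k|})$, and the derivation of the initial values via \eqref{vn-fvi} at $n=0$ all agree with the paper's computations.
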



\begin{proof} Parts (a) and (b) are again proved as in previous proofs of modulated Fourier expansions; see e.g.~\cite{hairer20lta} and \cite[Ch.\,XIII]{hairer06gni}. Here we only show (c) and (d), assuming that the bounds of (a) and (b) are already available. 

To derive the differential equations of (c), we insert \eqref{mfe-fvi} into the two-step formulation of the numerical method, expand $z^k(t\pm h)$  into a Taylor series at $t$, use Lemma~5.1 of \cite{hairer20afb} to expand the first and second-order difference quotients for $z^k(t)\e^{\iu kt/\eps}$ for $0<|k|\le N$, and expand $B_1$ and $E$ at $z^0(t)$. We then separate the terms multiplying $\e^{\iu k t/\eps}$ for $|k|\le N$.
Moreover, we consider the components $z^k_j=P_jz^k$ for $j=0,\pm1$. 

For $k=0$, $j=0$ we obtain
$$
\ddot z^0_0 + O(h^2) = P_0 \Bigl((\dot z^0+O(h^2)) \times B_1(z^0)+ E(z^0) + O(\eps^2/h)\Bigr),
$$
where the $O(h^2)$ terms result from the Taylor expansions of the second and first order difference quotients of $z^0$, and the (smaller) $O(\eps^2/h)$ term results from the Taylor expansion of $B_1$ and $E$ at $z^0$ and the bound $z^k=O(\eps^{|k|})$. This yields the first equation of (c).

For $k=0$, $j=1$ we obtain
\begin{align*}
\ddot z^0_1 + O(\eps h^2)  = \frac {2\eps}h \, \tan\Bigl(\frac h{2\eps} \Bigr) &
\biggl( \frac\iu\eps (\dot z^0_1+O(h^2)) \biggr.
\\
\biggl. +&\ P_1 \Bigl((\dot z^0+O(h^2)) \times B_1(z^0)+ E(z^0) + O(\eps^2/h)\Bigr) \biggr).
\end{align*}
We solve this equation for $\dot z^0_1$, which appears in the dominant term with a factor $h^{-1}$, and recall that $|\tan(h/(2\eps))|\ge c >0$
by the non-resonance condition~\eqref{nonres}. Using that $\ddot z^0_1$ and its higher derivatives are $O(\eps)$ by part (a), this yields
$$
\dot z^0_1 = \iu\eps P_1 \Bigl(\dot z^0 \times B_1(z^0)+ E(z^0) \Bigr) +  O(\eps h),
$$
which is the differential equation for $z^0_1$ stated in (c). The case $j=-1$ is obtained by taking complex conjugates.

For $k=1$, $j=1$ we find for $y^1_1(t)=  z^1_1(t)\e^{\iu t/\eps}$, using Lemma~5.1 of \cite{hairer20afb} and the $O(\eps)$ bound for $z^1_1$ and its derivatives of part (a),
\begin{align*}
&\frac{y^1_1(t+h)-2y^1_1(t)+y^1_1(t-h)}{h^2} 
\\
&\quad = \e^{\iu t/\eps} \biggl( -\frac 4{h^2} \, \sin^2\Bigl(\frac h{2\eps} \Bigr) z^1_1(t) +
\frac{2\,\iu}h \,  \sin\Bigl(\frac h{\eps} \Bigr) \dot z^1_1(t) + O(\eps) \biggr)
\end{align*}
and
\begin{align*}
& \frac{y^1_1(t+h)-y^1_1(t-h)}{2h}  
= \e^{\iu t/\eps} \,\biggl(\frac\iu h  \sin\Bigl(\frac h{\eps} \Bigr) z^1_1(t) + \cos\Bigl(\frac h{\eps} \Bigr) \dot z^1_1(t)+ O(\eps h) \biggr),
\end{align*}
and hence
\begin{align*}
&P_1 \, \mathrm{tanch}\Bigl( \frac h{2\eps} \widehat B_0 \Bigr) \biggl( \frac{y^1_1(t+h)-y^1_1(t-h)}{2h} \times \frac{B_0}\eps \biggr)
\\
&= \    \frac {2\eps}h \, \tan\Bigl(\frac h{2\eps} \Bigr) \frac\iu\eps \,
\e^{\iu t/\eps} \,\Bigl(\frac\iu h  \sin\Bigl(\frac h{\eps} \Bigr) z^1_1(t) + \cos\Bigl(\frac h{\eps} \Bigr) \dot z^1_1(t)+ O(\eps h) \Bigr)
\\
&= \ \e^{\iu t/\eps} \biggl( -\frac 4{h^2} \, \sin^2\Bigl(\frac h{2\eps} \Bigr) z^1_1(t)  
+ \frac{2\,\iu}h \, \tan\Bigl(\frac h{2\eps} \Bigr)\cos\Bigl(\frac h{\eps} \Bigr) \dot z^1_1(t) +
O(\eps) \biggr).
\end{align*}
Inserting \eqref{mfe-fvi} into the two-step formulation of the filtered variational integrator and collecting the terms with factor $\e^{\iu t/\eps} $, we thus obtain
\begin{align*}
& -\frac 4{h^2} \, \sin^2\Bigl(\frac h{2\eps} \Bigr) z^1_1(t) +
\frac{2\,\iu}h \,  \sin\Bigl(\frac h{\eps} \Bigr) \dot z^1_1(t) + O(\eps)
\\
&= \ -\frac 4{h^2} \, \sin^2\Bigl(\frac h{2\eps} \Bigr) z^1_1(t)  
+ \frac{2\,\iu}h \, \tan\Bigl(\frac h{2\eps} \Bigr)\cos\Bigl(\frac h{\eps} \Bigr) \dot z^1_1(t) +
O(\eps) 
\\
&\quad +\    \frac {2\eps}h \, \tan\Bigl(\frac h{2\eps} \Bigr) \, P_1 \Bigl( \frac\iu h  \sin\Bigl(\frac h{\eps} \Bigr) z^1_1(t)\times B_1(z^0(t)) 
+ O(\eps) \Bigr) .
\end{align*}
Here the dominant terms are the first terms on the left-hand and the right-hand sides, which are the same and thus cancel. The dominant terms then become the terms containing the factor $(2\,\iu/h)\dot z^1_1(t)$. Since a calculation shows that we have, with $\xi=h/(2\eps)$ for short,
$$
\sin(2\xi) - \tan(\xi)\cos(2\xi) = (\tan(2\xi) - \tan(\xi))\cos(2\xi)= \tan(\xi),
$$
the above equation yields the differential equation for $z^1_1$ as stated in part (c) of the theorem. The result for $z^{-1}_{-1}$ is obtained by taking complex conjugates.
%

The formulae for the initial values are obtained by the same arguments as in the proof of Theorem~\ref{thm:mfe-boris}, using here that
$(x^1-x^{-1})/(2h)$ is related to $v^0$ by \eqref{vn-fvi} for $n=0$.
\qed
\end{proof}

\section{Time scale $\eps^0$: 
error bounds for position and parallel velocity}

Comparing the modulated Fourier expansions of the numerical solution with that of the exact solution, we obtain the following error bounds from Theorems~\ref{thm:mfe}--\ref{thm:mfe-fvi}.

\begin{theorem} Consider applying the Boris method, the variational integrator and the filtered variational integrator to \eqref{ode} over a time interval $0\le t \le T$ (with $T$ independent of $\eps$) using a stepsize $h$ with 
$$
h^2\ge \eps.
$$
Suppose that the conditions of Theorem~\ref{thm:mfe-boris} are satisfied in the case of the Boris method and the variational integrator (in particular, small perpendicular starting velocity: $v_\perp^0=O(\eps)$), and that the conditions of 
Theorem~\ref{thm:mfe-fvi} are satisfied in the case of the filtered variational  integrator (in particular, the non-resonance conditions \eqref{nonres} and bounded initial velocity \eqref{init}). For each of the three methods, the errors in position $x$ and parallel velocity $v_\parallel=P_0v$ at time $t_n=nh \le T$  are then bounded by
$$
|x^n- x(t_n)| \le C h^2, \qquad |v^n_\parallel - v_\parallel(t_n)| \le C h^2 \qquad\quad (t_n\le T),
$$
where $C$ is independent of $\eps$, $h$ and $n$ with $h^2\ge \eps$ and $nh\le T$ (but depends on $T$).
\end{theorem}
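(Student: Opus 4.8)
The plan is to compare the modulated Fourier expansions of the exact solution (Theorem~\ref{thm:mfe}) with those of the three numerical methods (Theorems~\ref{thm:mfe-boris} and~\ref{thm:mfe-fvi}), term by term. For the exact solution we write $x(t) = \sum_{|k|\le N} z^k(t)\e^{\iu k t/\eps} + R_N(t)$; for the Boris method and the standard variational integrator we write $x^n = y(t) + (-1)^n z(t) + R_N(t)$ at $t=nh$; for the filtered variational integrator we write $x^n = \sum_{|k|\le N} \tilde z^k(t)\e^{\iu kt/\eps} + \tilde R_N(t)$. Choosing $N$ large enough (say $N\ge 4$, which makes all remainder terms $O(t^2 h^N) = O(h^2)$ on $0\le t\le T$ after absorbing the $O(t^2)$ into the $T$-dependent constant), the problem reduces to estimating the differences of the dominant modulation functions.

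First I would treat the filtered variational integrator, since its expansion is structurally closest to the exact one. The key observation is that the differential equations for $\tilde z^0_0$, $\tilde z^0_{\pm1}$, $\tilde z^{\pm1}_{\pm1}$ in Theorem~\ref{thm:mfe-fvi}(c) agree with those for $z^0_0$, $z^0_{\pm1}$, $z^{\pm1}_{\pm1}$ in Theorem~\ref{thm:mfe}(c) up to perturbations of size $O(h^2)$ (note that the $O(\eps^2/\eps)$-type terms and the $\frac\eps h\sin(h/\eps)$ coefficient in front of $z^{\pm1}_{\pm1}\times B_1$ differ from $1$ only by $O(h^2)$ under $h^2\ge\eps$, and the $O(\eps h)$ term is $O(h^3)$), and the initial values agree up to $O(h^2)$ as well (comparing Theorem~\ref{thm:mfe}(d) with Theorem~\ref{thm:mfe-fvi}(d), using $x^0=x(0)$, $v^0=\dot x(0)$). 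A Gronwall argument on the dominant system — which is a regular (non-stiff) differential equation for $z^0$, $\dot z^0_0$ and a slowly varying $z^{\pm1}_{\pm1}$ whose modulus is nearly constant — then gives $|z^k(t) - \tilde z^k(t)| \le C h^2$ for each $k$ over $0\le t\le T$; the algebraically determined higher modes inherit this via their defining expressions. Summing over $|k|\le N$ and using $|\e^{\iu kt/\eps}|=1$ yields $|x^n - x(t_n)| \le C h^2$. For $v_\parallel$ one differentiates the $k=0$ relation and uses that $P_0 v = P_0\dot z^0 + O(\cdot)$ matches up to $O(h^2)$ in both expansions, together with the velocity formula \eqref{vn-fvi} whose correction terms are $O(\eps)=O(h^2)$.

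Next, for the Boris method and the standard variational integrator, the comparison is with the collapsed expansion $y(t) + (-1)^n z(t)$. Here $y$ plays the role of $z^0$: its differential equations in Theorem~\ref{thm:mfe-boris}(c) agree with those for $z^0$ up to $O(h^2)$, and the initial values $y(0)=x^0 + O(h^2)$, $\dot y_0(0) = P_0 v^0 + O(h^2)$ match $z^0(0)$, $\dot z^0_0(0)$ up to $O(h^2)$ once we use the modified starting position $x^0 = x(0)+\eps\dot x(0)\times B_0$ (matching $z^0(0)$ of Theorem~\ref{thm:mfe}(d) to $O(\eps^2)$) and $P_0 v^0 = P_0\dot x(0)$. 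Hence Gronwall gives $|y(t) - z^0(t)|\le Ch^2$. The oscillatory part contributes $|(-1)^n z(t)| \le |z(t)| = O(h^2)$ directly from Theorem~\ref{thm:mfe-boris}(a), and — crucially — the exact solution's own oscillation $\sum_{1\le|k|\le N} z^k(t)\e^{\iu kt/\eps}$ is also $O(\eps) = O(h^2)$ because $z^{\pm1}_{\pm1}=O(\eps)$ and $z^k_j = O(\eps^{|k|+1})$; so both the "extra" numerical oscillation and the unresolved exact Larmor oscillation are absorbed into the $O(h^2)$ error. This is where the hypothesis $v_\perp^0 = O(\eps)$ is essential: it is exactly what guarantees $z_{\pm1}(0) = O(h^2)$ (indeed with the sharper choice \eqref{v-init-mod} even $O(h^4)$) so that $z(t)$ never grows beyond $O(h^2)$; without it $z$ would be $O(h^2/\eps)$, destroying the bound. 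The parallel velocity bound follows as before from $v^n = (x^{n+1}-x^{n-1})/(2h) = \dot y(t) - \dot z(t) + O(h^2)$ and projection with $P_0$.

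The main obstacle is not any single estimate but making the Gronwall comparison rigorous and uniform in $\eps$: the dominant modulation system is a coupled system in which $z^0$ and $\dot z^0_0$ evolve on the $O(1)$ time scale while $z^{\pm1}_{\pm1}$ (resp.\ $z_{\pm1}$) satisfies a differential equation with an $O(1/\eps)$-large imaginary part (exact case) or an $O(\eps/h^2)$-large one (Boris case) in its leading term. One must check that these fast rotations of the $\pm1$-modes are \emph{isometric} up to lower-order terms — i.e.\ that $\frac{\d}{\d t}|z^{\pm1}_{\pm1}|^2$ and $\frac{\d}{\d t}|z_{\pm1}|^2$ are only $O(\eps)$, as the excerpt already records — so that the difference variable satisfies a linear differential inequality with an $O(1)$-bounded coefficient and an $O(h^2)$ inhomogeneity, whereupon standard Gronwall closes the argument on $[0,T]$. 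Keeping careful track of which $O$-terms are $O(h^2)$ versus genuinely smaller (e.g.\ $O(\eps)$, $O(\eps h)$, $O(\eps^2/h)$) under the sole assumption $h^2\ge\eps$ is the bookkeeping burden, but no term ever exceeds $O(h^2)$, so the final bound is as stated.
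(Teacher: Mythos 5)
Your proposal follows the same route as the paper: the published proof is exactly this comparison of the modulated Fourier expansions, resting on the observation that the differential equations of the dominant modulation functions coincide up to $O(h^2)$ defects, and you have merely filled in the Gronwall and bookkeeping details that the paper leaves implicit. One parenthetical claim is wrong --- $\frac{\eps}{h}\sin(h/\eps)$ is $O(h)$, not $1+O(h^2)$, so the equations for $z^{\pm1}_{\pm1}$ in Theorems~\ref{thm:mfe} and~\ref{thm:mfe-fvi} do \emph{not} agree to $O(h^2)$ --- but this is harmless because those modes are themselves $O(\eps)\le O(h^2)$ and can simply be absorbed into the error, exactly as you already do with the oscillatory parts in the Boris case.
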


\begin{proof} The result is obtained by representing the exact and numerical solutions by their modulated Fourier expansions and using the bounds and differential equations of the modulation functions as given in Theorems~\ref{thm:mfe}--\ref{thm:mfe-fvi}. Note that the differential equations of the dominating modulation functions for the three methods and for the exact solution coincide up to defects of size $O(h^2)$, which lead to an $O(h^2)$ error in the positions. Inserting the modulated Fourier expansion of the numerical solution into the formula for the approximate velocity $v^n$ for each method and comparing with the time-differentiated modulated Fourier expansion of the exact solution then yields the $O(h^2)$ error bound for the parallel velocity.
\qed
\end{proof}

\begin{remark}
For $h^2\sim\eps$, the above error bounds are thus $O(\eps)$. For all three methods, the error bounds remain in general $O(\eps)$ also for smaller stepsizes $h\sim\eps$. This can be shown by comparing the modulated Fourier expansions for such stepsizes, as given in
\cite{hairer20lta} for the standard variational integrator.
The filtered Boris method of \cite{hairer20afb}, used with $h\sim\eps$,  has an $O(\eps^2)$ error in the position and the parallel velocity, and an $O(\eps)$ error in the perpendicular velocity.
\end{remark}

\begin{figure}[!ht]
\centerline{\includegraphics[scale=0.5]{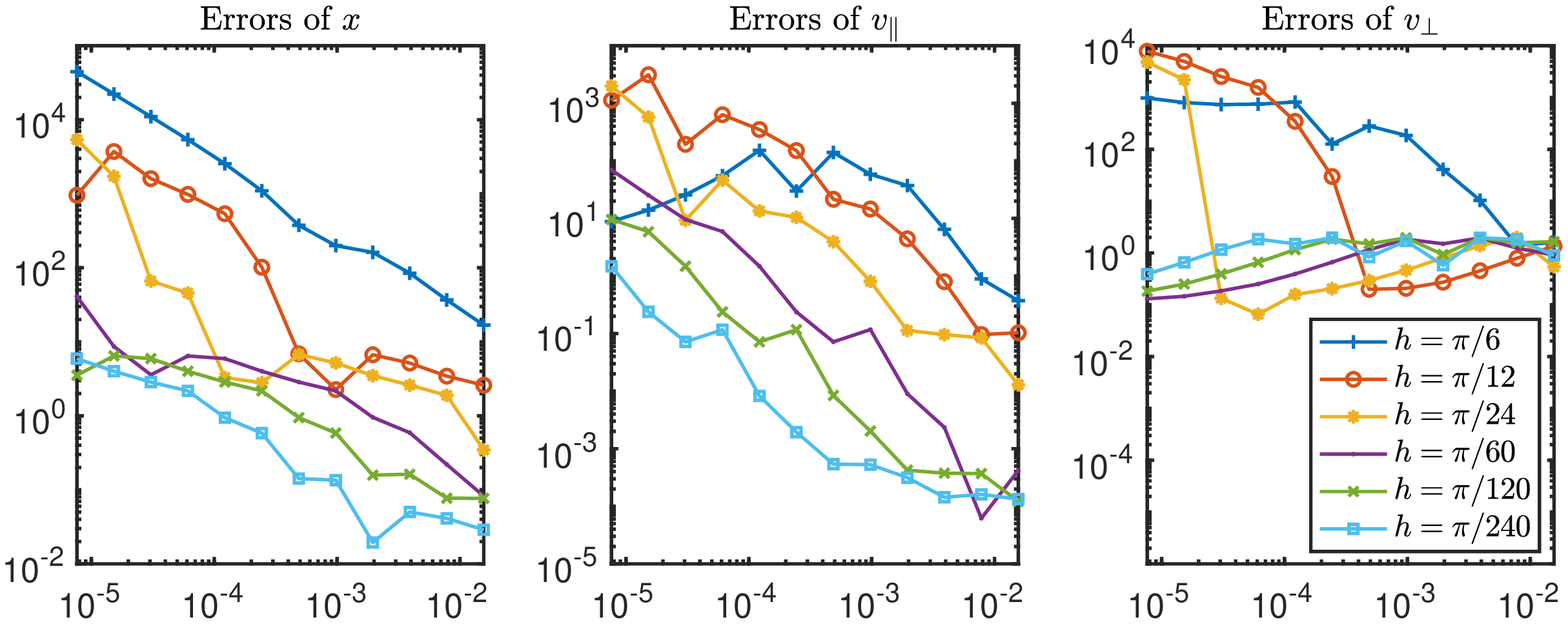}}
\centerline{\includegraphics[scale=0.5]{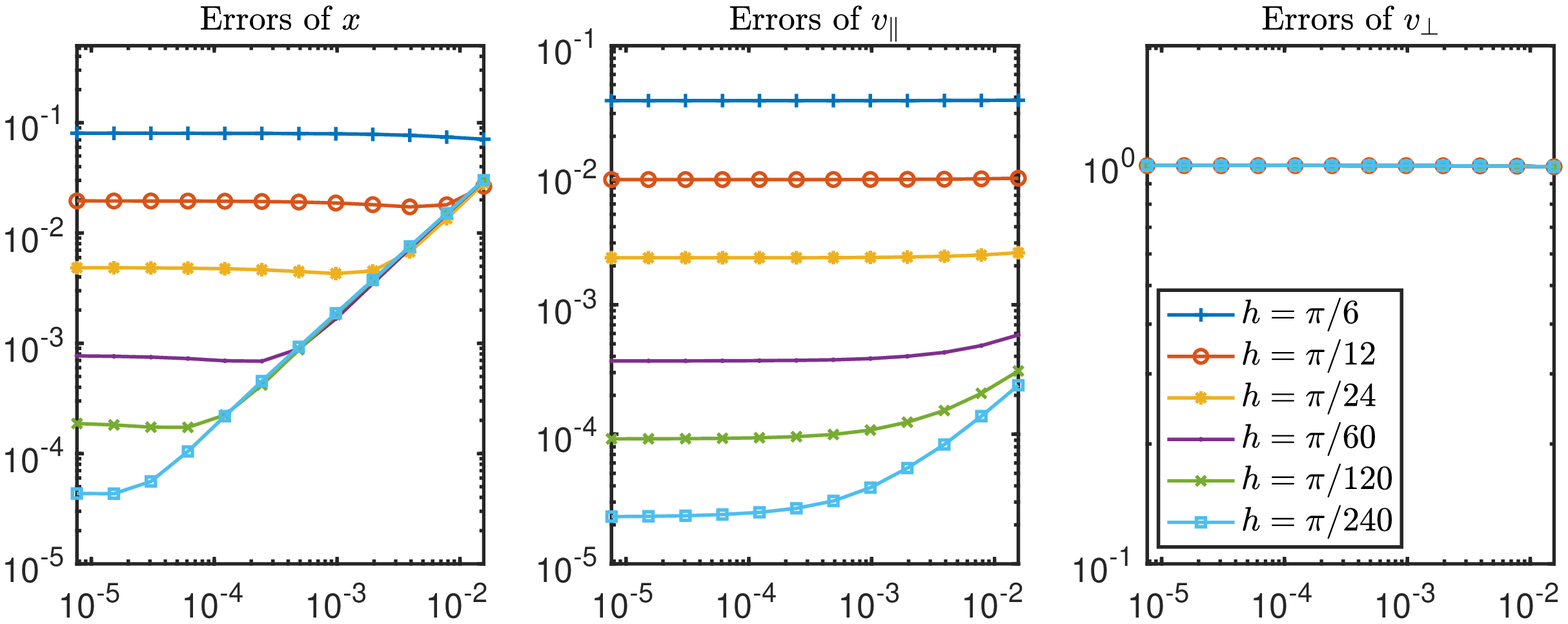}}
\centerline{\includegraphics[scale=0.5]{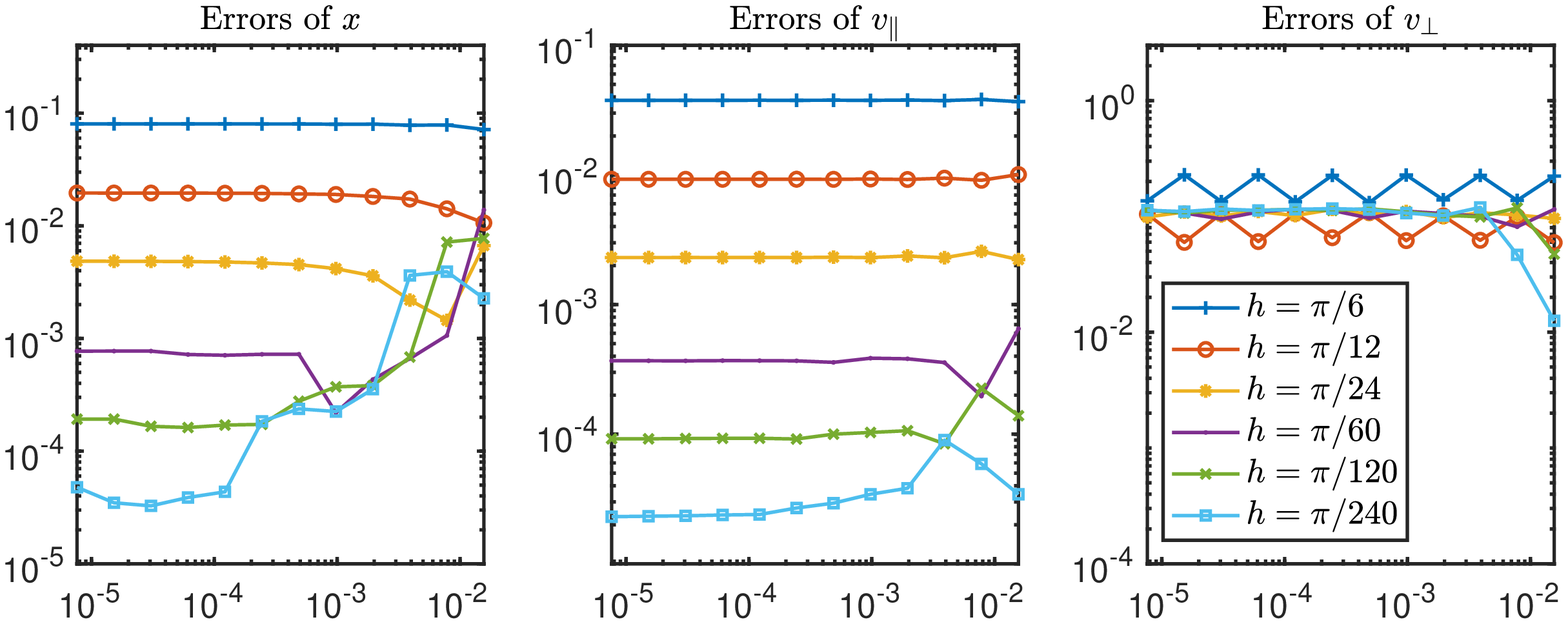}}
\caption{Global error vs.~$\eps$ ($\eps=1/2^j, j=6,\cdots 17$) with different $h$ for the Boris algorithm with starting values $x(0)$, $v(0)$ (top row), with modified starting values \eqref{v-init-mod} (centre row), and for the filtered variational integrator with starting values $x(0)$, $v(0)$ (bottom row).}\label{fig:order}
\end{figure}
\medskip\noindent
{\bf Numerical experiment.}
For the example of Section 2, Figure 5.1 
shows the relative errors in $x$, $v_\parallel$ and $v_\perp$ at time $t=\pi/2$ versus $\eps$  for various step sizes $h$ for three numerical approaches: 
\begin{itemize}
\item[(i)] in the top row for the Boris algorithm with the original initial data as starting values, 
\item[(ii)] in the centre row for the Boris algorithm with modified starting values \eqref{v-init-mod},
\item[(iii)] in the bottom row for the filtered variational integrator with the original initial data as starting values.
\end{itemize}
For any step size $h$, the errors in $x$ and $v_\parallel$ increase  roughly proportionally to $h^2/\eps$ when $\eps\rightarrow 0$ in case (i), whereas in cases (ii) and (iii) the errors tend to a constant error level proportional to $h^2$.

\section{Time scale $\eps^{-1}$: perpendicular drift}
\subsection{Perpendicular drift of the exact motion}
We let  $P_\parallel=P_0=B_0B_0^\top$ be the orthogonal projection onto the span of $B_0$, and
$P_\perp=P_1+P_{-1}=I-P_\parallel$ the orthogonal projection onto the plane orthogonal to $B_0$. We decompose $x\in\real^3$ as
$$
x=x_\parallel + x_\perp \quad\text{ with }\quad x_\parallel = P_\parallel x, \ \ x_\perp=P_\perp x.
$$
We assume that (with slight abuse of notation for $B_1$)
\begin{equation}\label{BE}
B_1(x) = B_1(x_\perp)+\eps B_2(x), \quad E(x) = E_\perp(x_\perp) + E_\parallel(x)+ \eps E_2(x),
\end{equation}
with $E_\perp\cdot B_0=0$ and $E_\parallel\times B_0=0$, and
where the functions $B_1,B_2$ and $E_\perp,E_\parallel,E_2$ on the right-hand side and all their derivatives are bounded independently of $\eps$. We thus only allow a weak dependence of the magnetic field and the perpendicular electric field on $x_\parallel$. We then have the following result.
%

\begin{theorem} \label{thm:drift}
Let $x(t)$ be a solution of \eqref{ode} with \eqref{BE}, with an initial velocity bounded independently of $\eps$ $(|\dot x(0)|\le M)$, which stays in a compact set $K$  for $0\le t \le c\,\eps^{-1}$
(with $K$  and $c$ independent of $\eps$). Then,  the solution $y_\perp(t)$ of the initial-value problem for the slow differential equation
\begin{equation}\label{ode-perp}
\dot y_\perp(t) = \eps E_\perp(y_\perp(t)) \times B_0, \qquad y_\perp(0)=x_\perp(0),
\end{equation}
remains $O(\eps)$-close to the perpendicular component of $x(t)$ 
over times $O(\eps^{-1})$:
\begin{equation} \label{xperp-approx}
| x_\perp(t) - y_\perp(t) | \le C\eps, \qquad 0\le t \le c/\eps.
\end{equation}
The constant $C$ is independent of $\varepsilon$ and $t$ with $0\leq t\leq c/\eps$, but depends on the initial velocity bound $M$, on bounds of derivatives of $B_1$ and $E$ on the compact set $K$, and on $c$.
\end{theorem}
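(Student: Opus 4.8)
The plan is to extract everything from the modulated Fourier expansion of Theorem~\ref{thm:mfe}. First I would note that the construction behind that theorem yields an expansion with the same bounds on the longer interval $[0,c/\eps]$: the dominant modulation functions $z^0,z^1_1,z^{-1}_{-1}$ obey the differential equations of item~(c) with $O(\eps^2)$ defects, their solutions keep the orders of item~(a) as long as $z^0(t)$ stays in a compact neighbourhood of $K$, and the remainder $R_N(t)=O(t^2\eps^N)$ is $O(\eps^{N-2})$ there, hence negligible once $N\ge 3$. Since $z^1_1,z^{-1}_{-1}=O(\eps)$ and all remaining modes are $O(\eps^2)$, this already gives $x_\perp(t)=z^0_\perp(t)+O(\eps)$ with $z^0_\perp=P_\perp z^0=z^0_1+z^0_{-1}$, so it suffices to prove $|z^0_\perp(t)-y_\perp(t)|\le C\eps$ on $[0,c/\eps]$.

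Next I would turn the equations $\dot z^0_{\pm1}=\pm\iu\eps P_{\pm1}\bigl(\dot z^0\times B_1(z^0)+E(z^0)\bigr)+O(\eps^2)$ of Theorem~\ref{thm:mfe}(c) into a closed slow equation for $z^0_\perp$. Adding the two and using $\iu(P_1-P_{-1})v=v\times B_0$ gives
\[
\dot z^0_\perp=\eps\,\bigl(\dot z^0\times B_1(z^0)+E(z^0)\bigr)\times B_0 + O(\eps^2).
\]
I then simplify the right-hand side with the structural hypothesis~\eqref{BE} and the bound $\dot z^0_{\pm1}=O(\eps)$ from item~(a): writing $P_0\dot z^0=\alpha(t)B_0$ with $\alpha=\dot z^0\cdot B_0$, and using $(B_0\times w)\times B_0=P_\perp w$, $E_\parallel\times B_0=0$ and the splittings $B_1(z^0)=B_1(z^0_\perp)+\eps B_2(z^0)$, $E(z^0)=E_\perp(z^0_\perp)+E_\parallel(z^0)+\eps E_2(z^0)$, the right-hand side collapses to
\[
\dot z^0_\perp=\eps\,\alpha(t)\,G(t)+\eps\,E_\perp(z^0_\perp)\times B_0+O(\eps^2),\qquad G:=P_\perp B_1(z^0_\perp),
\]
where $\alpha(t)=\tfrac{\d}{\d t}\bigl(z^0(t)\cdot B_0\bigr)$ is the derivative of a bounded scalar and $\dot G=O(\eps)$ since $z^0_\perp$ moves on the slow scale only.

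The crux is the comparison with the slow equation~\eqref{ode-perp} for $y_\perp$. Put $d(t)=z^0_\perp(t)-y_\perp(t)$, so $d(0)=z^0_\perp(0)-x_\perp(0)=O(\eps)$ by item~(d). Integrating from $0$ to $t$,
\[
d(t)=d(0)+\eps\!\int_0^t\!\alpha(s)G(s)\,\d s+\eps\!\int_0^t\!\bigl(E_\perp(z^0_\perp(s))-E_\perp(y_\perp(s))\bigr)\times B_0\,\d s+O(\eps^2 t).
\]
The middle term is $O(\eps)$ pointwise and would naively swamp the estimate over $t\le c/\eps$; the decisive point is that $\alpha$ is a total derivative and $G$ slowly varying, so one integration by parts turns it into $\eps\bigl[(z^0\cdot B_0)G\bigr]_0^t-\eps\int_0^t(z^0\cdot B_0)\dot G\,\d s=O(\eps)+O(\eps^2 t)=O(\eps)$ on the interval. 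This is exactly where the restriction in~\eqref{BE} that $B_1$ and $E_\perp$ depend only weakly on $x_\parallel$ is used, and where compactness enters, since it keeps $z^0\cdot B_0$ bounded along the field lines. The last integral is $O\bigl(\eps\int_0^t|d|\,\d s\bigr)$ by Lipschitz continuity of $E_\perp$, and $O(\eps^2 t)=O(\eps)$ for $t\le c/\eps$; Gronwall's lemma, together with a routine continuation keeping $y_\perp$ in a compact neighbourhood of $x_\perp$, then yields $|d(t)|\le C\eps\,\e^{Lc}$. Combining with $x_\perp=z^0_\perp+O(\eps)$ finishes the proof. The main obstacle is precisely this integration-by-parts estimate: without it, the perpendicular displacement accumulated from following the tilted field lines would destroy the $O(\eps)$ bound over the $\eps^{-1}$ time scale, while everything else is bookkeeping with the modulation functions.
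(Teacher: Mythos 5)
Your reduction to the slow equation for $z^0_\perp$ via $\iu(P_1-P_{-1})v=v\times B_0$, the use of \eqref{BE}, and above all the integration by parts that tames the term $\eps\int_0^t\alpha(s)G(s)\,\d s$ are exactly the decisive steps of the paper's proof (there the term appears as $\eps\int_0^t\bigl(\dot x_\parallel(s)\times B_1(y^{[0]}_\perp(s))\bigr)\times B_0\,\d s$ and is handled by the same integration by parts, using that $x_\parallel$ is bounded and $\dot y^{[0]}_\perp=O(\eps)$). However, there is a genuine gap at your very first step: you assert that the modulated Fourier expansion of Theorem~\ref{thm:mfe} --- the bounds on the modulation functions, the $O(\eps^2)$ defects in their differential equations, and the remainder estimate $R_N(t)=O(t^2\eps^N)$ --- remains valid with uniform constants on the whole interval $[0,c/\eps]$. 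Theorem~\ref{thm:mfe} gives this only on intervals of $\eps$-independent length $T$, with constants depending on $T$; that dependence is essentially exponential through Gronwall, and for the parallel component $z^0_0$, whose modulation equation carries no factor $\eps$ on its right-hand side, a direct propagation over times $t\sim\eps^{-1}$ would produce constants of size $\e^{L/\eps}$. Consequently your quantities $\alpha=\dot z^0\cdot B_0$, the boundedness of $z^0\cdot B_0$, and the smallness of $R_N$ on $[0,c/\eps]$ are not supported by what you have invoked.

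The paper closes exactly this gap with Lady Windermere's fan: Theorem~\ref{thm:mfe} is applied separately on each unit interval $[n,n+1]$ with $n\le c/\eps$; the slow perpendicular flow initialized at time $n$ is propagated to $c/\eps$, where the factor $\eps$ in front of the right-hand side makes Gronwall give only an $O(1)$ amplification over the long interval; the re-initialization discrepancies $y^{[n+1]}_\perp-y^{[n]}_\perp=O(\eps^2)$ at the interfaces are summed over the $O(\eps^{-1})$ intervals to give $O(\eps)$; and, crucially, the parallel velocity entering the drift equation is taken to be the \emph{exact} $\dot x_\parallel(s)$ (using $\dot z^0_\parallel=\dot x_\parallel+O(\eps)$ on each short interval), which is bounded by hypothesis, so the parallel modulation dynamics is never integrated over long times. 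Your argument needs this patching step (or an equivalent bootstrap) to legitimize the global bounds; the integration by parts you rightly single out is necessary but not sufficient. Once the patching is inserted, the rest of your computation goes through and coincides with the paper's.
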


\begin{remark} It is well known in the physical literature (going back to \cite[Eq.~(13)]{northrop63tam}) that the perpendicular velocity is largely determined by the $E\times B$ term, as is justified by averaging techniques; see also, e.g., \cite[Eq.~(6)]{filbet16asp} in the numerical literature. An $O(\eps)$ bound over times $O(\eps^{-1})$  as in \eqref{xperp-approx} was recently proved in \cite{filbet20cao} in the more restricted setting of a constant magnetic field $(B_1\equiv 0)$ and an electric field with $E_\parallel\equiv 0$.
\end{remark}

\begin{proof} The proof uses the modulated Fourier expansion of Theorem~\ref{thm:mfe}, in particular the differential equations for $z^0_{\pm 1}$ and $z^{\pm 1}_{\pm 1}$ in part (c), and the familiar argument of Lady Windermere's fan \cite{hairer93sod}. We structure the proof into four parts (a)--(d).

(a) Over the (short) time interval $0\le t \le 1$, Theorem 4.1 yields that
$$
x_\perp(t) = z^0_\perp(t) + z^1_1(t)\e^{\iu t/\eps}+z^{-1}_{-1}(t)\e^{-\iu t/\eps}
 + O(\eps^2),  
$$
where $z^0_\perp(t)=z^0_1(t)+z^0_{-1}(t)$ and $z^{\pm 1}_{\pm 1}(t)$
satisfy the differential equations
\begin{align*}
\dot z^0_\perp &= \eps \Bigl((\dot z^0_\parallel + \dot z^0_\perp)\times B_1(z^0_\perp) + E_\perp(z^0_\perp) \Bigr)\times B_0 + O(\eps^2),
\\
\dot{z}^{1}_{1}&=P_{1}\bigl(z^{1}_{1}\times B_1(z^0_\perp)\bigr)+O(\varepsilon^2),
\end{align*}
and $z^{-1}_{-1} = \overline {z^1_1}$.
We note that 
$\dot z^0_\parallel=\dot z^0_0=\dot x_\parallel + O(\eps)$, because we have $\frac{\d}{\d t}\bigl(z^1_0 \e^{it/\eps}\bigr) =
(\iu z^1_0/\eps + \dot z^1_0 )\e^{it/\eps} = O(\eps)$.

(b)
On every time interval $n\le t \le n+1$ (with $n\le c/\eps$) we can do the same and, denoting by $y^{[n]}_\perp$ the function $z^0_\perp$ on this interval and by $z^{[n]}_1$ the function $z^1_1$, we have
$$
x_\perp(t) = y^{[n]}_\perp(t) + 2\,\Re \bigl(z^{[n]}_1\e^{\iu t/\eps} \bigr) + O(\eps^2),\quad\ n \le t \le n+1,
$$
where $y^{[n]}_\perp$ and $z^{[n]}_1$ solve the initial value problems
\begin{align*}
&\dot y^{[n]}_\perp = \eps \Bigl(\dot x_\parallel \times B_1(y^{[n]}_\perp) + E_\perp(y^{[n]}_\perp) \Bigr)\times B_0, 
\\
&y^{[n]}_\perp(n)= x_\perp(n) - 2\,\Re \bigl(z^{[n]}_1(n)\e^{\iu n/\eps} \bigr),
\end{align*}
and 
\begin{align*}
& \dot z^{[n]}_1=P_{1}\bigl(z^{[n]}_1\times B_1(y^{[n]}_\perp)\bigr), 
\\
& z^{[n]}_1(n) = \iu \eps P_1 \dot x(n).
\end{align*}
We consider these initial value problems on the time interval $n \le t \le c/\eps$. By Theorem~\ref{thm:mfe}, we have
\begin{align*}
y^{[n]}_\perp(n+1) &= y^{[n+1]}_\perp(n+1) + O(\eps^2),
\\
z^{[n]}_1(n+1) &=  z^{[n+1]}_1(n+1) + O(\eps^2).
\end{align*}
In view of the factor $\eps$ in front of the right-hand side of the differential equations for  $y^{[n+1]}_\perp$ and $y^{[n]}_\perp$,
this estimate implies that
$$
y^{[n+1]}_\perp (t) - y^{[n]}_\perp(t) = O(\eps^2), \qquad n+1 \le t \le c/\eps.
$$
Moreover, 
taking the inner product of the differential equation for $z^{[n]}_1$ with $z^{[n]}_1$ shows that 
$$
\frac{\d}{\d t} |z^{[n]}_1|^2=2\, \Re \overline{z^{[n]}_1}^\top \dot z^{[n]}_1 =0,
$$
and hence
$$
|z^{[n]}_1(t)| = |z^{[n]}_1(n)|, \qquad n \le t \le c/\eps.
$$
(c) Next we study the difference between $y^{[0]}_\perp(t)$ and  $y_\perp(t)$ of \eqref{ode-perp}. We have
\begin{align*}
 y^{[0]}_\perp(t)- y_\perp(t) =& \ \Bigl( y^{[0]}_\perp(0)- y_\perp(0)\Bigr) + \eps \int_0^t \Bigl(E_\perp(y^{[0]}_\perp(s)) - E_\perp(y_\perp(s))\Bigr)\times B_0 \,\d s 
\\
& +\ \eps \int_0^t \bigl(\dot x_\parallel(s) \times B_1(y^{[0]}_\perp(s)) \bigr) \times B_0 \,\d s.
\end{align*}
The difference of the initial values is $O(\eps^2)$, and the last integral term is bounded using partial integration:
\begin{align*}
& \eps \int_0^t \bigl(\dot x_\parallel(s) \times B_1(y^{[0]}_\perp(s)) \bigr) \times B_0\,\d s 
\\
&=
\eps \Bigl( x_\parallel(t) \times B_1(y^{[0]}_\perp)(t) - x_\parallel(0) \times B_1(y^{[0]}_\perp(0))\Bigr) \times B_0
\\
&\quad -\  \eps \int_0^t \Bigl( x_\parallel(s) \times \frac{\partial B_1}{\partial x_\perp}(y^{[0]}_\perp(s)) \,\dot y^{[0]}_\perp(s) \Bigr) \times B_0 \,\d s .
\end{align*}
This is $O(\eps)$ for $0\le t \le c/\eps$, because $x_\parallel$ is bounded by assumption and $\dot y^{[0]}_\perp(s) = O(\eps)$.
With a Lipschitz bound of $E$ and the Gronwall lemma, this yields that
the difference between $y^{[0]}_\perp(t)$ and  $y_\perp(t)$ of \eqref{ode-perp} is bounded by
$$
y^{[0]}_\perp(t)- y_\perp(t) = O(\eps), \qquad 0\le t \le c/\eps.
$$

(d) With the above estimates we obtain, for $n\le t \le n+1 \le c/\eps$,
\begin{align*}
x_\perp(t) - y_\perp(t) =& \ \Bigl(x_\perp(t)- y^{[n]}_\perp(t) -  2\,\Re \bigl(z^{[n]}_1(t)\e^{\iu t/\eps} \bigr) \Bigr) 
+ 2\,\Re \bigl(z^{[n]}_1(t)\e^{\iu t/\eps} \bigr) 
\\ 
&+\ \sum_{j=0}^{n-1} \Bigl( y^{[j+1]}_\perp(t) - y^{[j]}_\perp(t) \Bigr) +
\Bigl( y^{[0]}_\perp(t)- y_\perp(t) \Bigr)
\\
=& \ O(\eps^2) + O(\eps) + O(n\eps^2) + O(\eps) = O(\eps),
\end{align*}
which is the stated result.
\qed
\end{proof}

\subsection{Perpendicular drift of numerical approximations}

For the Boris algorithm with large step size \eqref{eps-h2} and a small perpendicular component of the starting velocity we obtain the following result from Theorem~\ref{thm:mfe-boris}.

\begin{theorem} \label{them:drift-boris} Under the assumptions of Theorem~\ref{thm:mfe-boris} (in particular \eqref{hh-eps}--\eqref{v-init-boris}), and provided that the numerical solution $x^n$ of the Boris method
stays in a compact set $K$  for $0\le t \le c\,\eps^{-1}$
(with $K$  and $c$ independent of $\eps$ and $h$), 
the solution $y_\perp(t)$ of the initial-value problem for the slow differential equation $\eqref{ode-perp}$
remains $O(h^2)$-close to the perpendicular component of $x^n$ 
over times $O(\eps^{-1})$:
\begin{equation} \label{xperp-approx-boris}
| x^n_\perp - y_\perp(t_n) | \le C h^2, \qquad\ 0\le t_n=nh \le c/\eps.
\end{equation}
The constant $C$ is independent of $\varepsilon$ and $h$ and $n$ with $0\leq nh \leq c/\eps$, but depends on the initial velocity bound, on bounds of derivatives of $B_1$ and $E$ on the compact set $K$, and on $c$.
\end{theorem}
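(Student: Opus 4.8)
The plan is to mirror the proof of Theorem~\ref{thm:drift}, using the modulated Fourier expansion of Theorem~\ref{thm:mfe-boris} in place of that of Theorem~\ref{thm:mfe}. In the decomposition \eqref{mfe-boris} the smooth part $y(t)$ takes the role of $z^0(t)$ and the alternating part $(-1)^n z(t)$ takes the role of the oscillatory modulation terms $z^{\pm1}_{\pm1}(t)\e^{\pm\iu t/\eps}$; correspondingly $y_\perp:=P_\perp y$ and $z_\perp:=P_\perp z=z_1+z_{-1}=O(h^2)$ replace $z^0_\perp$ and $2\,\Re(z^1_1\e^{\iu t/\eps})$. Throughout, the condition $\eps\le h^2$ of \eqref{hh-eps} is used to absorb any $O(\eps)$ contribution into the claimed $O(h^2)$ bound.

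The one genuinely new computation is the reduced slow differential equation for $y_\perp$. Adding the two first-order equations of part~(c) of Theorem~\ref{thm:mfe-boris} for $\dot y_{\pm1}$ and using the identity $(\iu P_1-\iu P_{-1})w=w\times B_0$ (a consequence of the eigenstructure of $v\mapsto v\times B_0$) yields $\dot y_\perp=\eps\bigl(\dot y\times B_1(y)+E(y)\bigr)\times B_0+O(\eps h^2)$. Since $\dot y_\perp=O(\eps)$ by part~(a) of Theorem~\ref{thm:mfe-boris}, the structural assumptions \eqref{BE} (giving $\dot y\times B_1(y)=\dot y_\parallel\times B_1(y_\perp)+O(\eps)$ and $E_\parallel\times B_0=0$) turn this into
\[
\dot y_\perp=\eps\bigl(\dot y_\parallel\times B_1(y_\perp)+E_\perp(y_\perp)\bigr)\times B_0+O(\eps h^2),
\]
which differs from \eqref{ode-perp} only by the term $\eps\bigl(\dot y_\parallel\times B_1(y_\perp)\bigr)\times B_0$ and an $O(\eps h^2)$ defect. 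As in part~(c) of the proof of Theorem~\ref{thm:drift}, the $\dot y_\parallel$ term is removed by integration by parts: since $y_\parallel$ stays bounded and $\dot y_\perp=O(\eps)$, its total effect over times $O(\eps^{-1})$ is only $O(\eps)=O(h^2)$. For the alternating part, the third equation of part~(c) of Theorem~\ref{thm:mfe-boris} together with $z_{\pm1}=O(h^2)$ gives $\frac{\d}{\d t}|z_{\pm1}|^2=O(\eps h^4)$, so $|z_\perp(t)|=O(h^2)$ for all $t=nh\le c/\eps$, and $(-1)^n z_\perp(t_n)$ therefore contributes only $O(h^2)$ to $x^n_\perp-y_\perp(t_n)$.

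It then remains to run a Lady Windermere's fan argument \cite{hairer93sod} over $O(\eps^{-1})$ unit subintervals $m\le t\le m+1$, restarting the expansion of Theorem~\ref{thm:mfe-boris} on each and proceeding as in parts~(b)--(d) of the proof of Theorem~\ref{thm:drift}. Consecutive restarts agree at the matching times up to defects of higher order than $\eps h^2$, by the near-uniqueness of the modulated Fourier expansion (part~(a) of Theorem~\ref{thm:mfe-boris}); as the perpendicular modulation functions solve slow equations, these defects do not amplify, and their sum over the $O(\eps^{-1})$ subintervals is $O(h^2)$. On each subinterval the term $\eps\bigl(\dot y_\parallel\times B_1(y_\perp)\bigr)\times B_0$ is integrated by parts, with the boundary contributions telescoping across subintervals; the $E_\perp$-Lipschitz difference against \eqref{ode-perp} is controlled by Gronwall's inequality, whose amplification over time $O(\eps^{-1})$ is $O(1)$ precisely because the drift equation carries the factor $\eps$; the $O(\eps h^2)$ defects integrate to $O(h^2)$; and the initial values match up to $O(h^2)$ since $y(0)=x^0+O(h^2)$. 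Collecting these estimates with the $O(h^2)$ contribution of the alternating part gives \eqref{xperp-approx-boris}.

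The main obstacle is the control of the accumulated error over the $O(\eps^{-1})$ subintervals. The parallel modulation $y_\parallel$ obeys a second-order equation with an $O(1)$ Lipschitz constant, hence is \emph{not} accurate beyond bounded times; the argument must therefore never use the accuracy of $y_\parallel$, only its boundedness --- which is exactly why the integration-by-parts step is indispensable. Likewise, the accumulated remainder from the $O(\eps^{-1})$ restarts is $O(h^2)$ only because the perpendicular component of $R_N$ carries an extra factor $\eps$, i.e.\ $P_\perp R_N=O(t^2\eps h^{N-1})$ on each unit subinterval, as the construction yields (compare part~(b) of Theorem~\ref{thm:mfe-fvi}); verifying this sharpened bound and the telescoping of the integration-by-parts boundary terms is where the technical weight lies.
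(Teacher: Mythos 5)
Your proposal takes exactly the paper's route: the paper's own proof of this theorem is a single sentence invoking Theorem~\ref{thm:mfe-boris} and Lady Windermere's fan ``in the same way as in the proof of Theorem~\ref{thm:drift}'', and your derivation of the slow equation for $y_\perp$ from part~(c), the integration by parts removing the $\dot y_\parallel\times B_1(y_\perp)$ term, the $O(h^2)$ bound on $z_\perp$ from $\frac{\d}{\d t}|z_{\pm1}|^2=O(\eps h^4)$, and the patching over unit subintervals correctly fill in the details the paper omits. Your closing remark --- that summing over the $O(\eps^{-1})$ restarts requires the perpendicular remainder and matching defects to carry an extra factor of $\eps$, a bound the paper states explicitly only for the filtered integrator in Theorem~\ref{thm:mfe-fvi}(b) --- identifies a genuine technical point that the paper's claim of ``without any additional difficulty'' passes over in silence.
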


\begin{proof} The proof uses Theorem~\ref{thm:mfe-boris} 
and Lady Windermere's fan in the same way as in the proof of Theorem~\ref{thm:drift}, without any additional difficulty. We therefore omit the details.
\qed
\end{proof}

\begin{figure}[!ht]
\centerline{\includegraphics[scale=0.5]{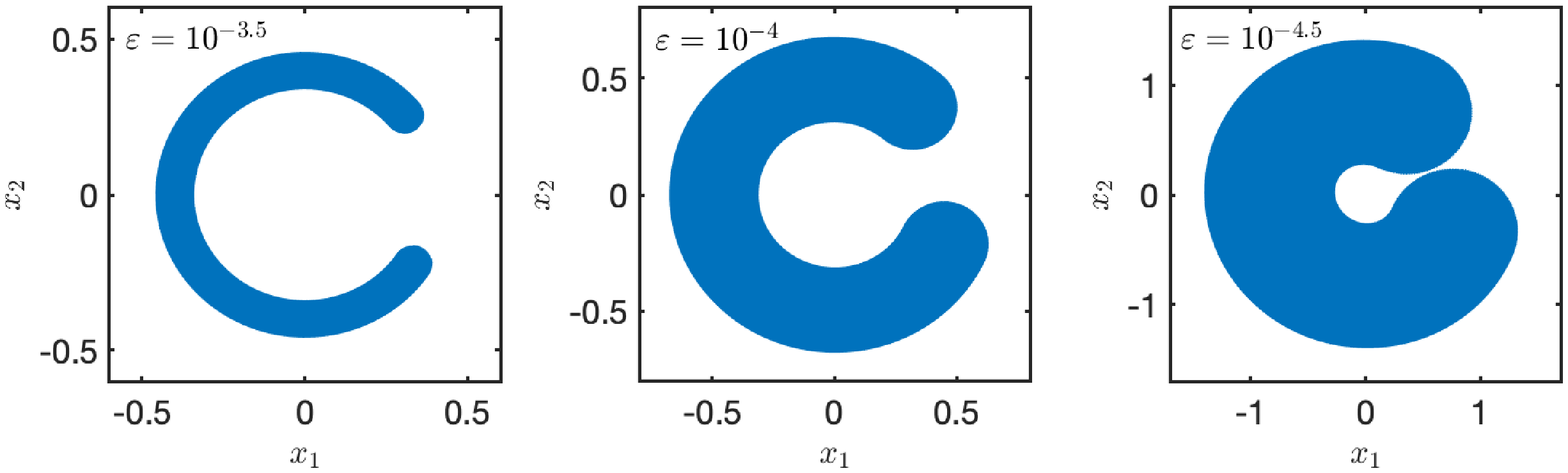}}
\centerline{\includegraphics[scale=0.5]{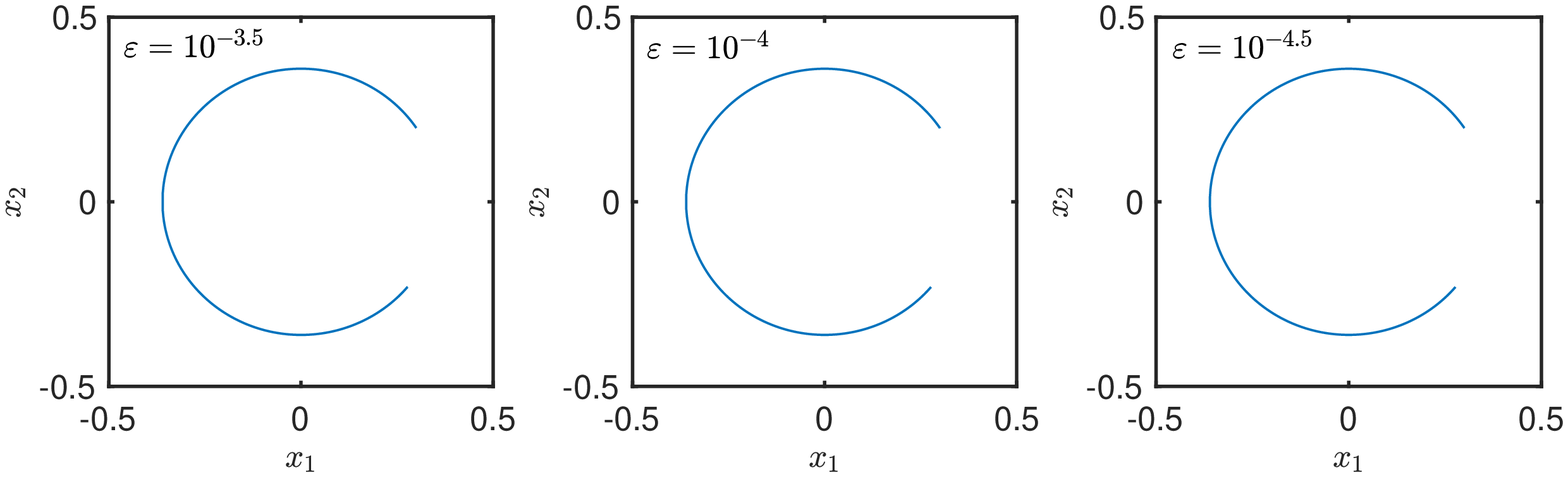}}
\centerline{\includegraphics[scale=0.5]{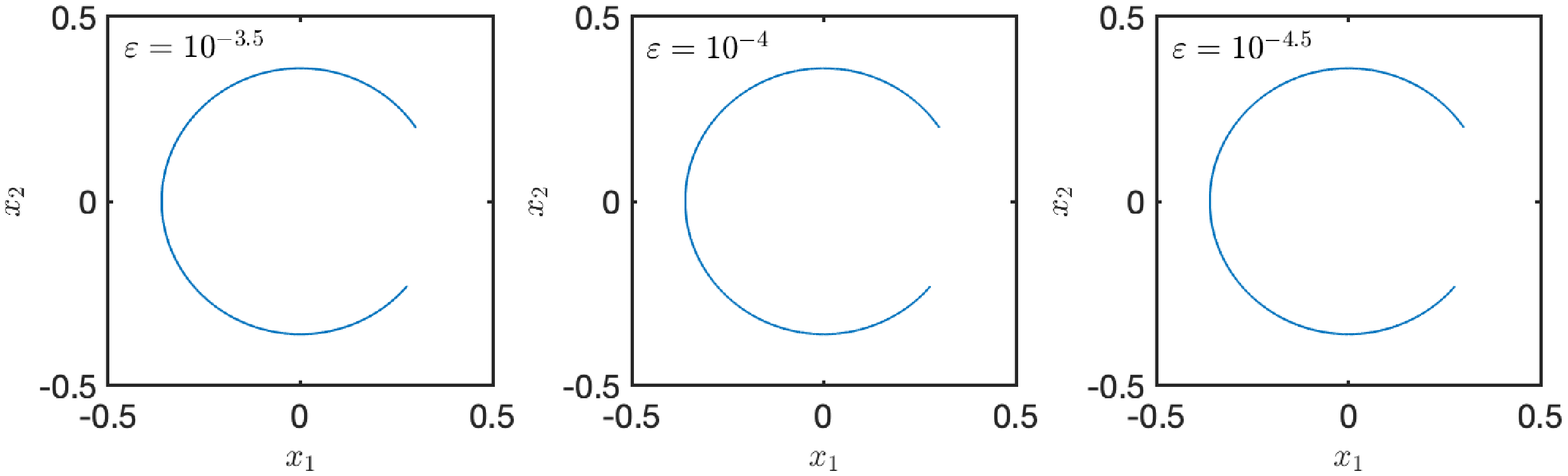}}
\caption{Particle trajectory for times $t\le 5/\eps$ projected onto the perpendicular plane as computed by the Boris algorithm with starting values $x(0)$, $v(0)$ (top row), with modified initial values \eqref{v-init-mod} (centre row), and by the filtered variational integrator with starting values $x(0)$, $v(0)$ (bottom row). The step size used is $h=10^{-2}$ in all cases.}\label{fig:motion}
\end{figure}

Analogous results hold true also for the standard and filtered variational integrators, for the latter with non-resonant stepsizes \eqref{nonres}, using the corresponding modulated Fourier expansions as given in Theorems~\ref{thm:mfe-boris} and~\ref{thm:mfe-fvi}. We note that for the filtered variational integrator we do not need the smallness assumption \eqref{v-init-boris} for the perpendicular component of the velocity required for the Boris and standard variational integrators, but the mere boundedness of the initial velocity suffices for the filtered variational integrator. However, in view of the $O(\eps h)$ remainder term (instead of $O(\eps h^2)$) in the differential equation for $z^0_{\pm1}$ in part (c) of Theorem~\ref{thm:mfe-fvi}, the error bound of $x^n_\perp$ for the filtered variational integrator is only $O(h)$ instead of $O(h^2)$.

\medskip\noindent
{\bf Numerical experiment.}
For the example of Section 2 and for the methods (i)--(iii) of the numerical experiments of Section 5, Figure 6.1 shows the projection of the computed particle trajectory onto the plane perpendicular to $B_0=e_3$ up to time $T=5/\eps$, for the fixed step size $h=10^{-2}$ and three values of $\eps$. It is observed that the Boris algorithm with the original initial velocity as starting velocity shows an enlarged gyroradius for $h\gg\eps$, while after modifying  the starting velocity to \eqref{v-init-mod}, the Boris algorithm shows correct results.
The same behaviour is observed also for the standard variational integrator (not shown here, since the pictures are indistinguishable). In contrast, the filtered variational integrator shows correct results both for the original initial values (as shown) and for the modified starting velocity (not shown here).

\section{Long-term near-conservation of magnetic moment and energy}

\subsection{Time scale $\eps^{-1}$: Standard variational integrator}

For the standard (unfiltered) variational integrator with step sizes \eqref{eps-h2} and the modified starting velocity \eqref{v-init-mod} we can show energy conservation up to $O(h^2)$ over time $\eps^{-1}$, provided that $h^6\le \eps$. We do not have, and do not expect, such a result for the Boris algorithm in a non-uniform magnetic field~\eqref{ode}.

\begin{theorem} \label{them:drift-boris} Under the assumptions of Theorem~\ref{thm:mfe-boris} , and provided that the numerical solution $x^n$ of the variational integrator with step size \eqref{eps-h2} and starting velocity \eqref{v-init-mod}
stays in a compact set $K$  for $0\le t \le c\,\eps^{-1}$
(with $K$  and $c$ independent of $\eps$ and $h$), 
the total energy \eqref{H} 
remains $O(h^2)$-close to the initial energy
over times $c\min(\eps^{-1},h^{-6})$:
\begin{equation} \label{H-vi}
| H(x^n,v^n) - H(x^0,v^0) | \le C h^2, \qquad\ 0\le nh \le c\min(\eps^{-1},h^{-6}).
\end{equation}
Moreover, with the modified initial velocity, the magnetic moment \eqref{I} remains $O(\eps^2)$ small over times $c\,\eps^{-1}$:
\begin{equation} \label{I-vi}
| I(x^n,v^n)| \le C \eps^2, \qquad\ 0\le nh \le c\,\eps^{-1}.
\end{equation}
The constants $C$ are independent of $\varepsilon$ and $h$ and $n$ with $0\leq nh \leq c/\eps$, but depend on bounds of derivatives of $B_1$ and $E$ on the compact set $K$, and on~$c$.
\end{theorem}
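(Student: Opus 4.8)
The plan is to read both estimates off the modulated Fourier expansion of Theorem~\ref{thm:mfe-boris}, which is available here precisely because the modified starting velocity \eqref{v-init-mod} satisfies the smallness condition \eqref{v-init-boris}. Fix a truncation index $N$ large enough for the bookkeeping, write $x^n=y(t)+(-1)^nz(t)+R_N(t)$ with $t=nh$, and Taylor-expand $y$ and $z$ about $t$ in $v^n=(x^{n+1}-x^{n-1})/(2h)$. Using $\dot z=\bigo(h^2)$ from part~(a) and controlling the $R_N$-contributions by part~(b) (for $t\le c/\eps$ with $\eps\le h^2$ these are $\bigo(h^2)$ once $N$ is large), this gives $v^n=\dot y(t)-(-1)^n\dot z(t)+\bigo(h^2)$, hence $|v^n|^2=|\dot y(t)|^2+\bigo(h^2)$ and $\phi(x^n)=\phi(y(t))+\bigo(h^2)$, so
\[
H(x^n,v^n)=H\bigl(y(t),\dot y(t)\bigr)+\bigo(h^2).
\]
The energy bound \eqref{H-vi} thus reduces to controlling the drift of $H(y(t),\dot y(t))$.

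For the magnetic moment I would start from \eqref{I-vperp}, $I(x^n,v^n)=\tfrac12|v^n_\perp|^2\bigl(1+\bigo(\eps)\bigr)+\bigo(\eps^2)$, so it suffices to show $v^n_\perp=\bigo(\eps)$. Projecting the velocity formula gives $v^n_\perp=\dot y_\perp(t)-(-1)^n\dot z_\perp(t)+P_\perp\bigo(h^2)$. From part~(a), $\dot y\times B_0=\bigo(\eps)$ with all derivatives, and since $\dot y_\perp=-(\dot y\times B_0)\times B_0$ this gives $\dot y_\perp=\bigo(\eps)$ and makes the $y$-part of the Taylor correction $\bigo(\eps h^2)$. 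For the oscillatory part, the equation $\dot z_{\pm1}=\mp4\iu(\eps/h^2)z_{\pm1}+\bigo(\eps h^2)$ of part~(c) together with $z_{\pm1}=\bigo(h^2)$ yields $\dot z_{\pm1}=\bigo(\eps)$, and a variation-of-constants estimate of the higher $t$-derivatives of $z_{\pm1}$, combined with $\eps\le h^2$, shows the $z$-part of the Taylor correction is likewise $\bigo(\eps)$. Hence $v^n_\perp=\bigo(\eps)$; since the modified initial data also makes $v^0_\perp=\bigo(\eps)$, this gives \eqref{I-vi}.

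The core of the proof is the long-time energy drift, where the naive argument is too weak. By part~(c), $y$ solves the original equation up to a defect $d(t)=\bigo(h^2)$, i.e.\ $\ddot y=\dot y\times B(y)-\nabla\phi(y)+d$, so $\tfrac{\d}{\d t}H(y,\dot y)=\langle\dot y,d\rangle=\bigo(h^2)$ and $|H(y(t),\dot y(t))-H(y(0),\dot y(0))|=\bigo(h^2t)$, which is useless for $t$ up to $\eps^{-1}$ or $h^{-6}$. Instead I would exploit that the integrator is \emph{variational}, not merely volume preserving: arguing as in the small-stepsize analysis of \cite{hairer20lta}, the modulation system satisfied by $(y,z)$ inherits a Lagrangian structure from the discrete Lagrangian of the method, and hence carries an associated modulation energy $\mathcal H$. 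One then shows (i) that $\mathcal H$ is $\bigo(h^2)$-close to $H(x^n,v^n)$ --- the $z$-contributions to $\mathcal H$ being of size $\bigo(h^4)$ and negligible at this level --- and (ii) that, evaluated along the actual modulation functions (which satisfy the modulation Euler--Lagrange equations up to the truncation defect), $\mathcal H$ changes by $\bigo(h^9)$ per step, so that $|\mathcal H(t)-\mathcal H(0)|=\bigo(t\,h^{8})$. Combining, $|H(x^n,v^n)-H(x^0,v^0)|\le\bigo(h^2)+\bigo(t\,h^{8})=\bigo(h^2)$ for $t\le ch^{-6}$; together with the range $t\le c/\eps$ of validity of Theorem~\ref{thm:mfe-boris} this is exactly \eqref{H-vi}.

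I expect the construction in (i)--(ii) to be the main obstacle, because the relevant modulated Fourier expansion is \emph{resonant}: the second mode lives at the Nyquist frequency $\pi/h$ (the factor $(-1)^n$), and the relation $(-1)^n\cdot(-1)^n=1$ feeds the oscillatory mode back into the guiding-centre equation. One must check that this resonant coupling is compatible with the Lagrangian structure of the modulation system and does not spoil the near-conservation of $\mathcal H$; it is precisely the variational character of the integrator --- which the Boris algorithm lacks in a non-uniform field --- that makes this work, consistent with the remark that no analogous energy statement is expected for Boris.
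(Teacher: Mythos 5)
Your overall strategy coincides with the paper's: read $v^n=\dot y(t)-(-1)^n\dot z(t)+O(\eps h^2)$ off the resonant modulated Fourier expansion of Theorem~\ref{thm:mfe-boris}, get the magnetic-moment bound from $\dot y_\perp=O(\eps)$ and $\dot z_\perp=O(\eps)$ via \eqref{I-vperp}, and for the energy construct a modified energy along the modulation functions that is $O(h^2)$-close to $H$ and drifts only by $O(t\,h^8)$. One framing difference: for this theorem the paper does not invoke the extended-Lagrangian machinery of the modulation system (that is reserved for the filtered integrator in Section~7.2). Instead it writes the $y$-equation with shift operators so that the left-hand side contains only even-order derivatives and the magnetic terms on the right only odd-order ones, multiplies by $\dot y^\top$, and identifies each term as a total time derivative (skew-symmetry of $\widehat B_0$ for the strong-field term, and Proposition~6.2 of the cited earlier work for the $A_1$-terms). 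Your worry about the resonant coupling being ``compatible with the Lagrangian structure'' is resolved more bluntly than you anticipate: the $(-1)^n\cdot(-1)^n=1$ feedback is \emph{not} absorbed into the conserved quantity at all; it is simply left as an $O(|z|^2)$ defect in $\frac{\d}{\d t}H_h(y,\dot y)$.

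This makes the one step you assert without proof the load-bearing one: that $|z|$ remains of size $O(h^4)$ (not merely the generic $O(h^2)$ of part~(a)) over the \emph{whole} range $0\le t\le c\,\eps^{-1}$, so that the defect is $O(h^8)$ on every unit subinterval. The initial smallness $z(0)=O(h^4)$ follows from \eqref{v-init-mod} and part~(d), but the persistence requires an argument: the paper multiplies the $z_\perp$-equation by $2z_\perp^\top$ to get $\frac{\d}{\d t}|z_\perp|^2\le C\eps|z_\perp|^2+O(\eps h^N)$, hence $|z_\perp(t)|\le \e^{\widetilde c\eps t}|z_\perp(0)|+O(t\eps h^N)$, together with $|z_0|\le Ch^2|z_\perp|$, patched over unit intervals. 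Without this, $|z|^2$ is only $O(h^4)$ and the drift estimate degrades to $O(t\,h^4)$, which would give the time scale $h^{-2}$ instead of $h^{-6}$. Note also that the exponential factor $\e^{\widetilde c\eps t}$ is exactly what restricts the whole statement to times $O(\eps^{-1})$; you should make that dependence explicit rather than attributing the restriction only to the range of validity of Theorem~\ref{thm:mfe-boris}.
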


\begin{proof} The proof uses Theorem~\ref{thm:mfe-boris} and
arguments from the proof of Proposition 6.2 in \cite{hairer17smm}.
We first consider the energy behaviour over a short time interval of length $1$, over which we can apply Theorem~\ref{thm:mfe-boris}.
With $D=\d/\d t$ and the shift operator $\e^{hD}$,  with $\delta(\zeta)=(\zeta-\zeta^{-1})/2$ and $\rho(\zeta)=\zeta-2+\zeta^{-1}$, and with
the expansions $\delta(e^h)/(2h)=(1+\alpha_2 h^2 + \alpha_{4}h^{4} +\dots)$ and
$\rho(e^h)/h^2=(1+\beta_2 h^2 + \beta_{4}h^{4} +\dots)$,
we write the equation for the function $y(t)$ in the decomposition \eqref{mfe-boris} as
\begin{align} \nonumber
&\ddot y + \beta_2 h^2 y^{(4)} + \beta_4 h^4 y^{(6)} + \ldots = (\dot y + \alpha_2h^2 y^{(3)} + \alpha_4 h^4 y^{(5)}+\ldots )\times \frac{B_0}\eps
\\ \label{y-eq}
&\quad + A_1'(y)^\top \frac{\delta(\e^{hD})}{2h} y - \frac{\delta(\e^{hD})}{2h} A_1(y) - \nabla \phi(y) + O(|z|^2) + O(h^N),
\end{align}
where the left-hand side contains only {\it even\/}-order
derivatives of $y$, and the right-hand side contains only {\it odd\/}-order derivatives of $y$.
We multiply both sides of (\ref{y-eq}) with $\dot y^\top$. The multiplied left-hand side is
the time derivative of an expression 
in which the appearing
second and higher derivatives of $y$ can be substituted as functions of
$(y,\dot y)$ via the differential equation for $y$ in part (c) of Theorem~\ref{thm:mfe-boris}; cf.~\cite{hairer18ebo}.
On the right-hand side
we have
\begin{align} \label{right-hand}
&-\dot y^\top\eps^{-1} \widehat B_0 (\alpha_2h^2 y^{(3)} + \alpha_4 h^4 y^{(5)}+\ldots )
\\ \nonumber
&
+ \dot y^\top \frac1{2h}\Bigl(A_1'(y)^\top  \delta(e^{hD}) y -  \delta(e^{hD}) A_1(y)\Bigr)
 - \frac \d{\d t} \phi(y) + O(|z|^2) + O(h^N) .
\end{align}
The first term is $O(h^2)$ because $\dot y_\perp=\dot y_1+\dot y_{-1}$ and its derivatives are $O(\eps)$ by Theorem~\ref{thm:mfe-boris}. Since $\widehat B_0$ is a skew-symmetric matrix, the first term is again the time derivative of an expression 
in which the appearing
second and higher derivatives of $y$ can be substituted as functions of
$(y,\dot y)$;  cf.~\cite{hairer18ebo}. The same holds true for the second term, as is shown in the proof of Proposition 6.2 
of~\cite{hairer17smm}.

We have thus found a function $H_h(x,v)$ with the properties that uniformly for all $x$ in a bounded domain and all bounded $v$ with
$v_\perp=O(\eps)$ we have
\begin{align}\label{Hh-H}
H_h(x,v)&=H(x,v)+O(h^2) ,
\\
\label{ddt-Hh}
\frac{\d}{\d t} H_h(y(t),\dot y(t)) &= O(|z(t)|^2)+ O(h^N).
\end{align}
We now consider the equation for $z$. With the starting velocity \eqref{v-init-mod} we have $|z(0)| \le c_0 h^4$ for some constant $c_0$; see part (d) of Theorem~\ref{thm:mfe-boris}. The differential equation for $z_\perp=z_1+z_{-1}$ can be written as
$$
\dot z_\perp = \frac{4\eps}{h^2} z_\perp \times B_0 + O(\eps |z_\perp|) + O(\eps h^N).
$$
Multiplying this equation with $2 (z_\perp)^\top$ and noting that $2 (z_\perp)^\top\dot z_\perp=  (\d/\d t) |z_\perp|^2$, we obtain
$$
\frac \d{\d t} |z_\perp|^2 \le C\eps |z_\perp|^2 + O(\eps h^{N}),
$$
which shows that $|z_\perp(t)| \le \e^{\widetilde c\eps t} |z_\perp(0)|+ O(t \eps h^{N})$. Moreover, from the proof of Theorem~\ref{thm:mfe-boris} we have $|z_0(t)|\le C h^2 |z_\perp(t)|$.
Patching many short time intervals of length 1 together as in part (b) of the proof of Theorem~\ref{thm:drift}, we find that on each of these intervals up to time  $c\,\eps^{-1}$
(but not on longer time intervals $\eps^{-\alpha}$ with $\alpha>1$ because of the $\e^{\widetilde c\eps t}$ exponential growth of our bound of $z_\perp$),  we can apply Theorem~\ref{thm:mfe-boris} and the oscillatory component $z$ on the interval remains of size $O(h^4)$. By \eqref{ddt-Hh} we thus have
$$
H_h(y(t),\dot y(t)) = H_h(y(0),\dot y(0)) + O(t h^8).
$$
(Different to Section~6, we now do not put a superscript on $y$ and $z$ to designate the interval of length 1 in which $t$ lies).
Together with 
$$
H(x^n,v^n)= H_h(x^n,v^n)+O(h^2) = H_h (y(t_n),\dot y(t_n)) + O(h^2),
$$
this yields the stated result for the energy. 

The long-term smallness of the magnetic moment follows from \eqref{I-vperp} and the relation
$v_\perp^n = \dot y_\perp(t_n) - (-1)^n \dot z_\perp(t_n)+ O(\eps h^2)$. This yields $v_\perp^n =O(\eps)$ by the differential equations in part (c) of Theorem~\ref{thm:mfe-boris} for $y_{\pm1}$ and $z_{\pm 1}$, which contain a factor $\eps$ on the right-hand side. These functions are again patched together over many short intervals as is done in the proof of Theorem~\ref{thm:drift}.
\qed
\end{proof}

\subsection{Time scale $\eps^{-N}$ for $N>1$: Filtered variational integrator}
We have the following result on the long-term near-conservation of magnetic moment and energy by the filtered variational integrator with non-resonant large step sizes \eqref{hh-fvi} with \eqref{nonres}.

\begin{theorem} \label{them:drift-boris} Let $M > N$ be arbitrary positive integers. Under the assumptions of Theorem~\ref{thm:mfe-fvi} (in particular \eqref{hh-fvi}--\eqref{nonres} and an initial velocity bounded independently of $\eps$), and provided that the numerical positions $x^n$ of the filtered variational integrator
stay in a compact set $K$  for $0\le t \le c\,\eps^{-N}$
(with $K$  and $c$ independent of $\eps$ and $h$), the magnetic moment and the total energy along the numerical solution $(x^n,v^n)$ remain almost conserved over such long times:
$$
\begin{aligned}
| \,I(x^n,v^n)-I(x^0,v^0) \,| & \le C h
\\
| H(x^n,v^n)-H(x^0,v^0) |  & \le C h
\end{aligned}
\qquad \text{for}\quad 0\le t \le c\,\min(h^{-M},\eps^{-N}).
$$
The constant $C$ is independent of $\varepsilon$ and $h$ and $n$ with $0\leq nh \leq c/\eps$, but depends on the initial velocity bound, on bounds of derivatives of $B_1$ and $E$ on the compact set $K$,  on $c$, and on the choice of $M$ and $N$. 
\end{theorem}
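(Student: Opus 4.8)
The plan is to follow the variational modulated-Fourier-expansion strategy for long-term analysis, as developed in \cite[Ch.\,XIII]{hairer06gni} and adapted to charged-particle integrators in \cite{hairer20lta,hairer20afb,hairer17smm}, now exploiting that the filtered variational integrator carries the discrete Lagrangian $L_h$ with the anisotropically modified mass matrix $\Psi^{-1}$ exhibited in Section~3.3. First I would fix the modulated Fourier expansion \eqref{mfe-fvi} of Theorem~\ref{thm:mfe-fvi}, choosing the mode-truncation index equal to the given integer $N$ and the order of the underlying Taylor truncation equal to the given integer $M>N$, so that the modulation functions satisfy their equations with a defect of size $O(h^{M+1})+O(\eps^{N+1})$ and the remainder in \eqref{mfe-fvi} is controlled as in part~(b). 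The non-resonance conditions \eqref{nonres} enter precisely here, guaranteeing invertibility of the operators that determine the off-diagonal modulation functions and hence the very availability of the expansion.

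The central step is to endow the modulation system with a variational structure. Inserting the ansatz $x^n\approx\sum_{|k|\le N}z^k(t)\e^{\iu kt/\eps}$ into the discrete action $\sum_nL_h(x^n,x^{n+1})$ and passing to the formal limit produces an action functional $\mathcal S(\bfz)$ in the collection $\bfz=(z^k)_{|k|\le N}$ whose Euler--Lagrange equations are exactly the modulation equations underlying Theorem~\ref{thm:mfe-fvi}, up to the above defects. This functional is invariant under two one-parameter group actions: the $S^1$-action $\bfz\mapsto(\e^{\iu k\theta}z^k)_{|k|\le N}$ coming from a phase shift of the fast oscillation, and the time translation $t\mapsto t+\tau$. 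Noether's theorem then yields two formal first integrals of the modulation system, a modulated momentum $\mathcal I[\bfz](t)$, whose leading term reproduces the magnetic moment through \eqref{I-vperp}, and a modulated energy $\mathcal H[\bfz](t)$, whose leading term reproduces the total energy \eqref{H}. Along the truncated modulation functions these are not exactly conserved, but substituting the differential equations and size bounds of parts~(a) and~(c) of Theorem~\ref{thm:mfe-fvi} for the higher derivatives shows that $\tfrac{\d}{\d t}\mathcal I[\bfz]$ and $\tfrac{\d}{\d t}\mathcal H[\bfz]$ are of size $O(h^{M+1})+O(\eps^{N+1})$, so that over $0\le t\le c\min(h^{-M},\eps^{-N})$ the accumulated drift of both quantities stays $O(h)$ (using $\eps\le h^2$).

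It then remains to relate the two modulated invariants to the genuine quantities along the numerical solution. Using \eqref{mfe-fvi}, the velocity formula \eqref{vn-fvi} together with the difference-quotient identities already used in the proof of Theorem~\ref{thm:mfe-fvi}, the remainder bounds of part~(b), and the size bounds of part~(a) --- in which the only $O(h)$-level effects are the $z^k_0=O(h\eps^{|k|})$ terms and the filter approximations in \eqref{vn-fvi} --- one obtains $\mathcal H[\bfz](t_n)=H(x^n,v^n)+O(h)$ and $\mathcal I[\bfz](t_n)=I(x^n,v^n)+O(h)$, the latter via \eqref{I-vperp}. Since the modulated Fourier expansion of Theorem~\ref{thm:mfe-fvi} is established on bounded time intervals only, I would extend it to the long interval $[0,c\min(h^{-M},\eps^{-N})]$ by the Lady Windermere's fan patching already used in the proof of Theorem~\ref{thm:drift} and in the energy estimate for the standard variational integrator in Section~7.1, which on every subinterval reproduces the same modulation functions up to the remainder and so yields a globally defined pair $\mathcal H[\bfz],\mathcal I[\bfz]$ with the drift bound above. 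Combining $\mathcal H[\bfz](t_n)-\mathcal H[\bfz](0)=O(h)$ with the two matching identities gives $|H(x^n,v^n)-H(x^0,v^0)|\le Ch$, and likewise for $I$.

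The hard part will be the second step: setting up the modulation action $\mathcal S(\bfz)$ for the \emph{filtered} integrator --- carrying the filter matrices $\Psi$ and $\Phi$ and the non-resonance structure \eqref{nonres} correctly through the Noether computation --- and checking that the resulting abstract Noether charge genuinely equals the magnetic moment $I$ up to $O(h)$ rather than some unrelated conserved quantity, while also producing the defect sizes $O(h^{M+1})+O(\eps^{N+1})$ quoted above. Once the Lagrangian structure of the modulation system and the two almost-invariants are in place, the matching estimates and the Lady Windermere's fan patching are routine adaptations of the arguments already used for Theorem~\ref{thm:drift} and the preceding energy estimate, in the manner of Proposition~6.2 of \cite{hairer17smm}.
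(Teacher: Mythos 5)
Your proposal follows essentially the same route as the paper's proof: establish the Lagrangian structure of the modulation system for the discrete Lagrangian with the modified mass matrix $\Psi^{-1}$, derive two almost-invariants from the phase-shift ($S^1$) and time-translation invariances à la Noether, show these agree with the magnetic moment and the total energy up to $O(h)$, and patch short intervals together to reach the long time scale $\min(h^{-M},\eps^{-N})$. The only cosmetic difference is that the paper carries out the Noether computation directly on the discrete modulation equations (multiplying by $-\iu k\,(y^k)^*$ resp.\ $(\dot y^k)^*$) rather than through an action functional, and it simplifies the remainder bookkeeping by assuming $h^m\le\eps$, but these are presentational choices, not a different argument.
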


\begin{proof} The proof uses arguments that are very similar to the proofs of Theorems 2.2 and 2.3 of \cite{hairer20lta} on the long-term near-conservation properties of the standard variational integrator for step sizes $h\le c\eps$. We therefore only indicate the main steps in the proof, which are marked as items (i)-(iv) below.

 To simplify the expressions for the remainder terms, we assume in the following the mild stepsize restriction $h^m\le \eps$ for some fixed $m> 2$ and we choose $M\ge mN$. This is only done for ease of presentation
and allows us to cover the time scale $\eps^{-N}$. Without this assumption we arrive at the stated time scale $\min(h^{-M},\eps^{-N})$.

(i) ({\it Lagrangian structure of the modulation equations}; cf.~\cite[(5.23)]{hairer20lta})
Over a time interval of length~$1$
we consider the modulation functions $z^k(t)$ of Theorem~\ref{thm:mfe-fvi} multiplied with the corresponding highly oscillatory exponentials:
$$
\text{
$y^k(t)=z^k(t)\mathrm e^{\mathrm{i}kt/\varepsilon}$ for $|k|\leq N$ and $y^k(t)=0$ for $|k|> N$. 
}
$$
We write $\mathbf{y}=(y^k)_{k\in\mathbb{Z}}$ and define the extended potentials
\[
\begin{aligned}
\mathcal{U}(\mathbf{y})&=\sum_{0\leq m\leq N \atop s(\alpha)=0}\frac{1}{m!}\,\phi^{(m)}(y^0)\mathbf{y}^\alpha\\
\mathcal{{A}}(\mathbf{y})&=\left({\mathcal{A}}_k(\mathbf{y})\right)_{k\in\mathbb{Z}}=\left(\sum_{0\leq m\leq N \atop s(\alpha)=k}\frac{1}{m!}{A}^{(m)}(y^0)\mathbf{y}^\alpha\right)_{k\in\mathbb{Z}},
\end{aligned}
\]
where the sums are taken over all multi-indices $\alpha = (\alpha^1,\dots,\alpha^m)$ with $\alpha^j\in \mathbb{Z}\setminus\{ 0 \}$ with prescribed sum $s(\alpha)=\alpha_1+\ldots+\alpha_m$, and where we use the notation 
$\phi^{(m)}(y^0)\mathbf{y}^\alpha= \phi^{(m)}(y^0)(y^{\alpha^1},\dots,y^{\alpha^m})$ and analogously for ${A}^{(m)}(y^0)\mathbf{y}^\alpha$. The terms for $m=0$ are to be interpreted as $\phi(y^0)$ and $A(y^0)$.

The system of modulation equations of the filtered variational integrator can then be written, up to $O(\eps^{N})$, as the discrete Euler-Lagrange equations corresponding to the discrete Lagrangian
\begin{align*}
\mathcal{L}_h(\mathbf{y}^n,\mathbf{y}^{n+1}) &= \frac h2(\mathbf{v}^{n+1/2})^* (\Psi^{-1}\otimes I)\mathbf{v}^{n+1/2} 
\\
&\ + 
\frac h2\bigl(\mathcal{{A}}(\mathbf{y}^n)+\mathcal{{A}}(\mathbf{y}^{n+1})\bigr)^* \mathbf{v}^{n+1/2} -
\frac h2\bigl(\mathcal{{U}}(\mathbf{y}^n)+\mathcal{{U}}(\mathbf{y}^{n+1})\bigr)
\end{align*}
with $\mathbf{v}^{n+1/2}=(\mathbf{y}^{n+1} - \mathbf{y}^{n})/h$, which differs from that of the standard variational integrator only by the modified kinetic energy term with $\Psi^{-1}$. We thus have
\begin{equation}\label{eq:modulation}
\Psi^{-1}\delta_h^2y^k=\sum_{j\in\mathbb{Z}}\left(\frac{\partial{ \mathcal{A}}_j}{\partial y^k}(\mathbf y)\right)^*\delta_{2h}y^j-\delta_{2h}{\mathcal{A}}_k(\mathbf y)-\left(\frac{\partial \mathcal{U}}{\partial y^k}(\mathbf y)\right)^*+O(\varepsilon^{N}),
\end{equation}
where $\delta_{2h} f(t) =(f(t+h)-f(t-h))/(2h)$ and $\delta_h^2f(t)=(f(t+h)-2f(t)+f(t-h))/h^2$ denote the first-order and second-order symmetric difference quotients, respectively.

(ii) ({\it Almost-invariant close to the magnetic moment}; cf.~\cite[Theorem 5.2]{hairer20lta})
With the group action $S(\lambda)\mathbf{y}=(e^{\mathrm{i}k\lambda}y^k)_{k\in\mathbb{Z}}$ (for $\lambda\in\real$), we have
\[
\mathcal{U}(S(\lambda)\mathbf{y})=\mathcal{U}(\mathbf{y}),\quad {\mathcal{A}}(S(\lambda)\mathbf{y})=S(\lambda){\mathcal{A}}(\mathbf{y}) \quad \text{for all} ~ \lambda.
\]
Differentiation with respect to $\lambda$ (at $\lambda=0$) yields
\[
\begin{aligned}
    \sum_{k\in\mathbb{Z}}\mathrm{i}k\frac{\partial \mathcal{U}}{\partial y^k}(\mathbf{y})y^k&=0\\
    \sum_{j\in\mathbb{Z}}\mathrm{i}j\frac{\partial {\mathcal{A}}_k}{\partial y^j}(\mathbf{y})y^j&=\mathrm{i}k{\mathcal{A}}_k(\mathbf{y}) \quad \text{for} \quad k\in\mathbb{Z}.
\end{aligned}
\]
Multiplying \eqref{eq:modulation} with $-\iu k (y^k)^*$, summing over $k$ and using these relations yields that the function
\[
\begin{aligned}
\mathcal{I}_h[\mathbf y](t)=&\ -\frac{\mathrm i}{\varepsilon h}\sum_k k y^k(t)^*\Psi^{-1}y^k(t+h)
\\
&\ +\frac{\mathrm i}{2\varepsilon}\sum_k k\left({\mathcal{A}}_k(\mathbf y(t))^*y^k(t+h)-y^k(t)^*{\mathcal{A}}_k(\mathbf{y}(t+h))\right)
\end{aligned}
\]
satisfies
\begin{equation}\label{Ih}
\mathcal{I}_h[\mathbf y](t)-\mathcal{I}_h[\mathbf y](t-h)=O(h\varepsilon^{N+1})
\end{equation}
and is thus an almost-invariant of the modulation system.
Using the bounds of the modulation functions, we find that
\begin{align*}
\mathcal{I}_h[\mathbf y](t) =&\ -\frac{\mathrm i}{\varepsilon h}\left(y^1(t)^*\Psi^{-1}  y^1(t+h)-y^{-1}(t)^*\Psi^{-1}y^{-1}(t+h)\right) 
\\
&\ +
\frac{\mathrm i}{2\varepsilon^2}\left(y^1(t)^*(y^1(t+h)\times B_0)-y^{-1}(t)^*(y^{-1}(t+h)\times B_0)\right) + O(\eps).
\end{align*}
Here, a calculation shows that the first term equals $(1+\cos(h/\varepsilon))|z^1_1|^2/\eps^2+O(h)$, and the second term equals
$-\cos(h/\eps)|z_1^1|^2/\eps^2+ O(h)$. So we obtain
$$
\mathcal{I}_h[\mathbf y](t)=\frac{1}{\varepsilon^2}|z_1^1(t)|^2+O(h).
$$
On the other hand, since 
\[
\begin{aligned}
v^n=&\ \Phi\,\frac{x^{n+1}-x^{n-1}}{2h}+O(\eps)\\
=&\left(I+(1-{\rm sinc}(h/\eps)^{-1})\hat{B}_0^2\right)\frac{x^{n+1}-x^{n-1}}{2h}+O(\eps)\\
=&\ \dot{z}^0(t)+\frac{{\rm i}}{\varepsilon}\left(z_1^1(t)\e^{{\rm{i}}t/\varepsilon}-z_{-1}^{-1}(t)\e^{{-\rm{i}}t/\varepsilon}\right)+O(h)
\quad\text{ at }t=nh
\end{aligned}
\]
and $\dot{z}^0(t)\times B_0=O(\eps)$,
we find that
$$
I(x^n,v^n)=
\tfrac12 |v^n\times B_0|^2 +O(\eps) = \frac{1}{\varepsilon^2}|z_1^1(t)|^2+O(h).
$$
So we obtain that the magnetic moment along the numerical solution is $O(h)$-close to the almost-invariant:
\begin{equation}\label{mag-mom}
 I(x^n,v^n)=\mathcal{I}_h[\mathbf y](nh) +O(h).
\end{equation}

(iii) ({\it Almost-invariant close to the total energy}; cf.~\cite[Theorem 5.3]{hairer20lta})
Multiplying \eqref{eq:modulation} with $ (\dot y^k)^*$ and summing over $k$ gives
\begin{align}\nonumber
\sum_k(\dot{y}^k)^*\Psi^{-1}\delta_h^2y^k
 &\ -\sum_k\left(\frac{\mathrm{d}}{\mathrm{d}t}\mathcal{A}_k(\mathbf y)^{*}\delta_{2h}y^k
  -(\dot{y}^k)^{*}\delta_{2h}\mathcal{A}_k(\mathbf y)\right)+\frac{\mathrm{d}}{\mathrm{d}t}\mathcal{U}(\mathbf y)
\\
&\hspace{6cm}=O(\varepsilon^{N}).
\label{eq:energyproof1}
\end{align}
The arguments in the proof of Theorem~5.3 in \cite{hairer20lta} 
show that each of the three terms on the left-hand side is a total differential up to $O(\eps^{N})$. So there exists a function 
\[
\mathcal{H}_h[\mathbf y](t)=\mathcal{K}_h[\mathbf y](t)
+\mathcal{M}_h[\mathbf y](t)+\mathcal{U}[\mathbf y](t),
\]
where the time derivatives of the three terms on the right-hand side equal the three corresponding terms on the left-hand side of (\ref{eq:energyproof1}), and we have
\[
\frac{\mathrm d}{\mathrm d t}\mathcal{H}_h[\mathbf y](t)=O(\varepsilon^{N}).
\]
We now determine the dominant part of $\mathcal{H}_h[\mathbf y]$.
We find
\[
\begin{aligned}
\mathcal{K}_h[\mathbf y]&=\tfrac{1}{2}(\dot{z}^0)^*\Psi^{-1}\dot{z}^0+\frac{2(z^1)^*\Psi^{-1}z^1}{h^2}\left(\frac{h}{\eps}\sin(h/\eps)-2\sin^2(h/2\eps)\right)+O(h)\\
&=\tfrac{1}{2}|\dot{z}^0|^2+\frac{2|z^1_1|^2}{{\rm tanc}(h/2\varepsilon)h^2}\left(\frac{h}{\eps}\sin(h/\eps)-2\sin^2(h/2\eps)\right)+O(h)\\
&=\tfrac{1}{2}|\dot{z}^0|^2+\frac{|z^1_1|^2}{\eps^2}(1+\cos(h/\eps))+O(h)
\\[2mm]
\mathcal{M}_h[\mathbf y] &= -\cos(h/\eps) \frac{|z^1_1|^2}{\eps^2}+ O(h)
\\[2mm]
\mathcal{U}[\mathbf y] &= \phi(z^0)+O(h).
\end{aligned}
\]
Thus we have
\begin{equation} \label{Hh}
\mathcal{H}_h[\mathbf y](t)=\tfrac{1}{2}|\dot{z}^0(t)|^2+\frac{|z_1^1(t)|^2}{\varepsilon^2}+\phi(z^0(t))+O(h).
\end{equation}
On the other hand, from the formula for $v^n$ in (ii) we have, at $t=nh$,
\[
\tfrac{1}{2}|v^n|^2=\tfrac{1}{2}|\dot{z}^0(t)|^2+\frac{|z_1^1(t)|^2}{\varepsilon^2}+O(h).
\]
The energy along the numerical solution is therefore
\[
H(x^n,v^n)=\tfrac{1}{2}|v^n|^2+\phi(x^n)=\tfrac{1}{2}|\dot{z}^0(t)|^2+\frac{|z_1^1(t)|^2}{\varepsilon^2}+\phi(z^0(t))+O(h).
\]
and hence we have
\[
H(x^n,v^n)=\mathcal{H}_h[\mathbf y](t) + O(h).
\]

(iv) ({\it From short to long time intervals}; cf.~\cite[Section 4.5]{hairer20lta}, \cite[Section XIII.7]{hairer06gni}).
The stated long-time near-conservation results are
 now  obtained  by  patching together  the  short-time  near-conservation  results  of  (ii) and (iii) over many intervals of length 1, via an often-used argument that involves the uniqueness up to $O(\eps^{N+1})$ of the modulation functions.
 \qed
\end{proof}

\medskip\noindent{\bf Numerical experiment.} We illustrate the energy behaviour of the numerical methods for the magnetic field
\[
B(x)=
\frac{1}{\varepsilon}
\begin{pmatrix}
1\\0\\0.5
\end{pmatrix}+
\begin{pmatrix}
x_2-x_3\\x_1+x_3\\x_2-x_1,
\end{pmatrix}
\]  
and the scalar potential
$
\phi(x)=x_1^3-x_2^3+\frac{1}{5}x_1^4+x_2^4+x_3^4.
$ 
We take the initial values 
$
x(0)=(0,1,0.1)^\top,\ v(0)=(0.09,0.05,0.2)^\top.
$ 

We apply the three numerical integrators of Section 3 with $\eps=10^{-4}$, step size $h=10^{-2}$, and final time $T=10^7$. Figure \ref{fig:energy} shows the energy error $H(x_n,v_n)-H(x_0,v_0)$ along the numerical solutions of the Boris algorithm, the standard variational integrator and the filtered variational integrator, taking the initial values $x(0),v(0)$ as starting values for all three methods. 

The errors of the Boris algorithm and the variational integrator (top and centre picture) appear to behave randomly. Running several trajectories corresponding to random perturbations of the initial data of magnitude $10^{-14}$ showed energy errors that look like random walks with a deviation of magnitude 10 for $t\le 10^{6}$. For larger times, some of the trajectories showed blow-up behaviour.

In contrast, the energy error of the filtered variational integrator oscillates with a small amplitude without drift 
(bottom picture of Figure \ref{fig:energy}).
The error $I(x_n,v_n)-I(x_0,v_0)$ of the magnetic moment along the numerical solution of the filtered variational integrator has a very similar behaviour (not shown here).

If we apply the Boris algorithm and the standard variational integrator with {\it modified initial values} \eqref{v-init-mod}, then the magnetic moment remains small over very long time, oscillating between $0$ and approximately $2\cdot 10^{-6}$ over the whole time interval. In this case of modified initial velocity, we observe very good near-conservation of energy for the variational integrator while there is a linear drift for the Boris algorithm; see Figure~\ref{fig:energy2}. 

\begin{figure}[h]
\centerline{\includegraphics[scale=0.48]{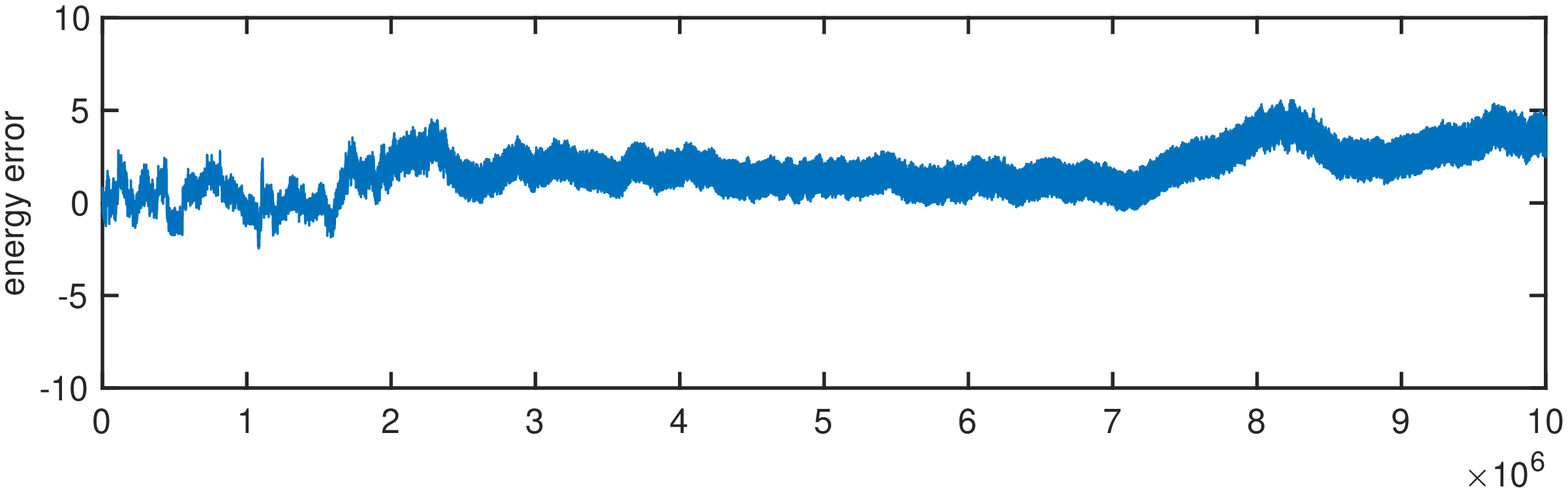}}
\centerline{\includegraphics[scale=0.48]{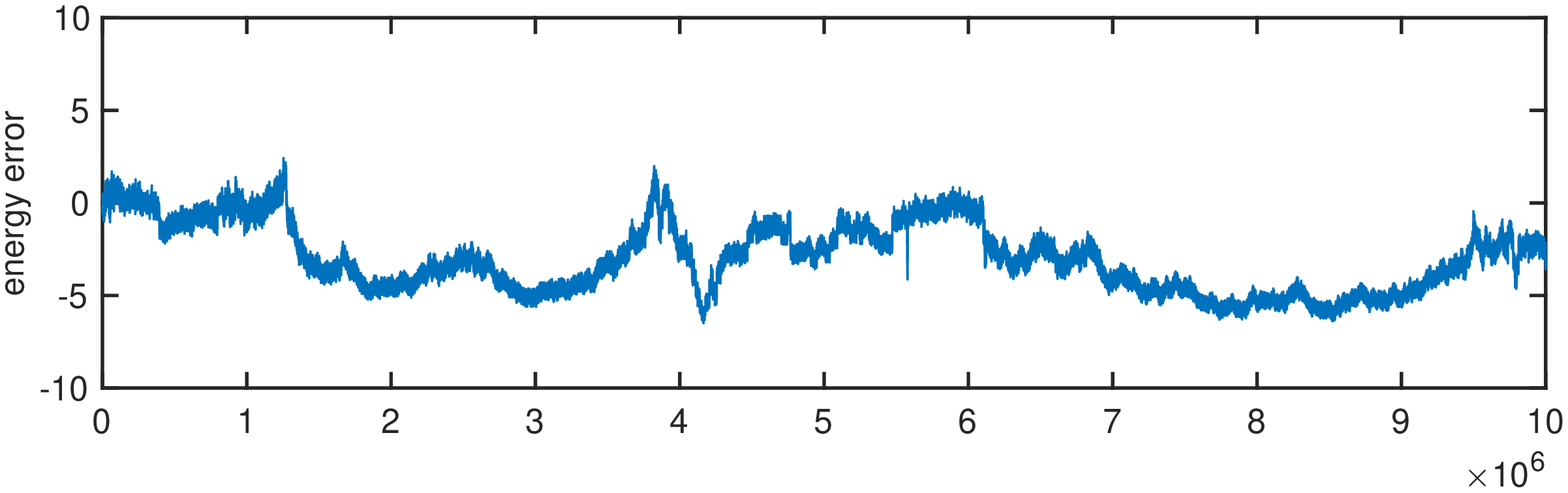}}
\centerline{\includegraphics[scale=0.48]{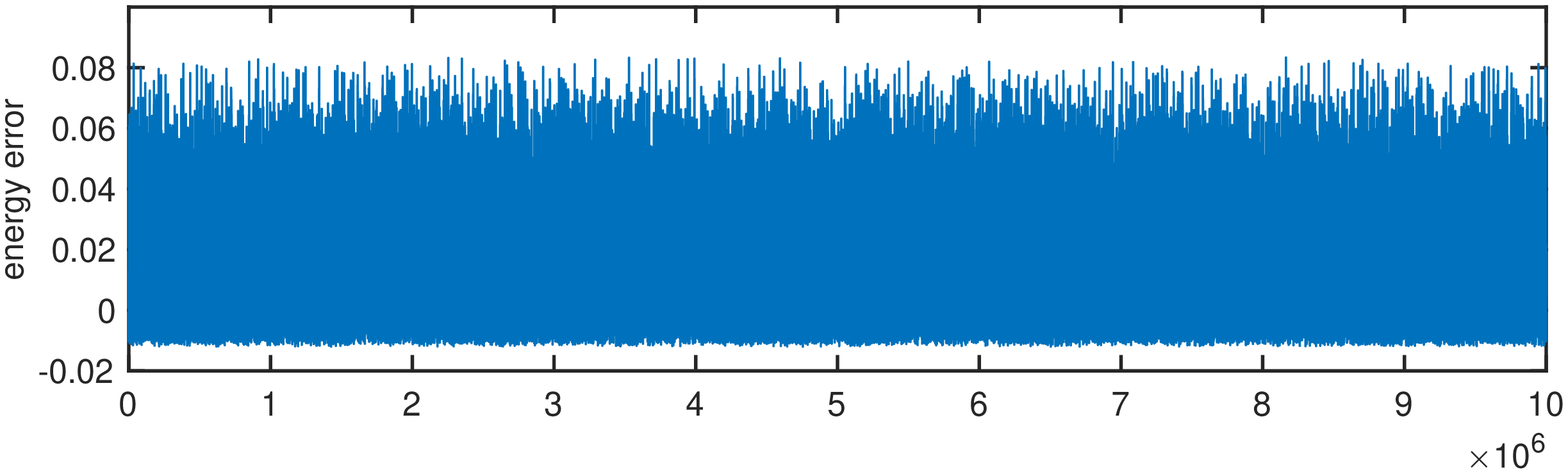}}
\caption{Energy error $H(x_n,v_n)-H(x_0,v_0)$ along the numerical solutions of the Boris algorithm (top), of the standard variational integrator (centre) and of the filtered variational integrator (bottom), obtained with $\eps=10^{-4}$ and $h=10^{-2}$.}\label{fig:energy}
\end{figure}


\begin{figure}[h]
\centerline{\includegraphics[scale=0.48]{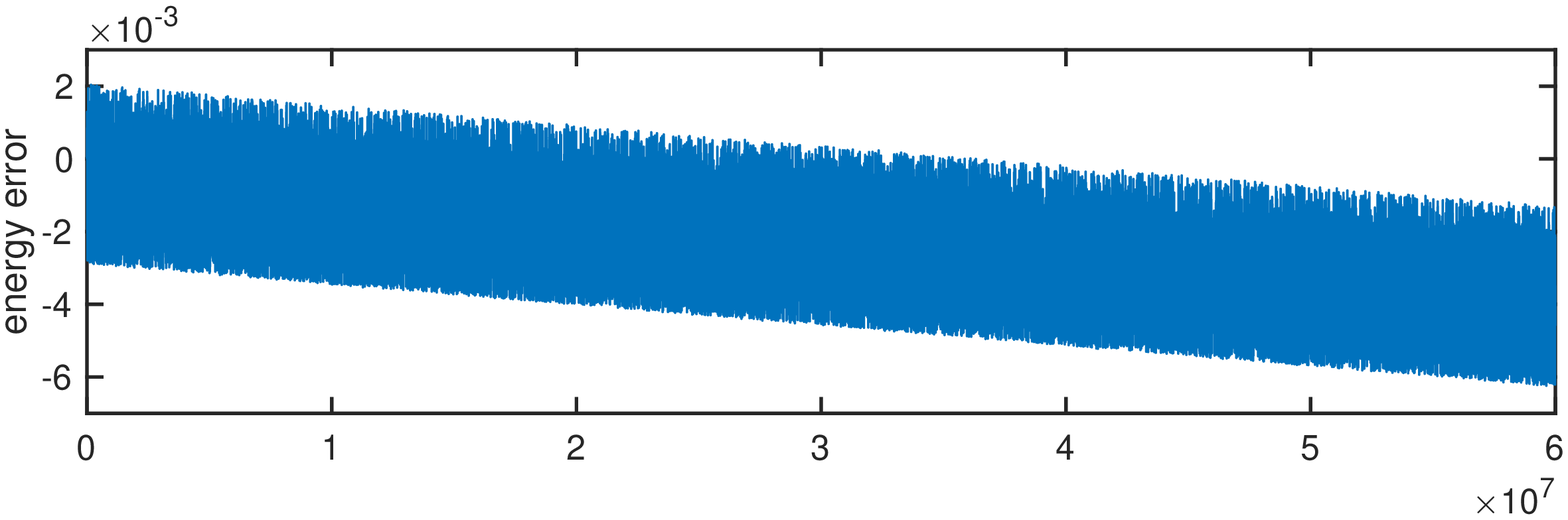}}
\centerline{\includegraphics[scale=0.48]{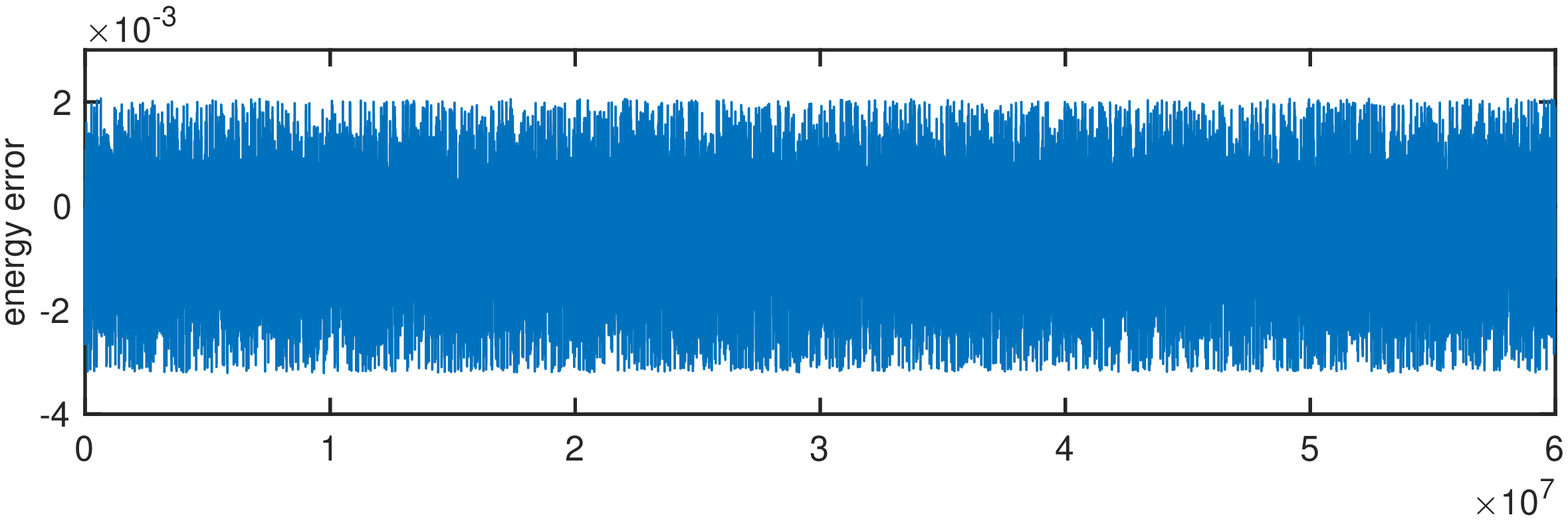}}
\caption{Energy error $H(x_n,v_n)-H(x_0,v_0)$ along the numerical solutions of the Boris algorithm (top) and of the standard variational integrator (bottom) obtained with modified initial values \eqref{v-init-mod} and step size $h=10^{-2}$, for $\eps=10^{-4}$.}\label{fig:energy2}
\end{figure}

\section*{Acknowledgement} 
This work was partially supported by the Swiss National Science Foundation, grant No. 200020\_192129,
and by the Deutsche Forschungsgemeinschaft (DFG, German Research Foundation) -- Project-ID258734477 -- SFB 1173.  The work by Yanyan Shi was done at the University of T\"ubingen during her one-year research stay, which was funded by a scholarship provided by the University of the Chinese Academy of Sciences (UCAS).

\renewcommand{\refname}{\normalsize\bf References}
\small
\bibliographystyle{acm}
\bibliography{HLW}

\end{document}